\numberwithin{figure}{section}
\newtheorem{theorem}{Theorem}[section]       
\newtheorem{conjecture}{Conjecture}                                        
\newtheorem{prop}[theorem]{Proposition}
\newtheorem{defn}[theorem]{Definition}
\newtheorem{remark}{Remark}[section]
\newtheorem{exa}{Example}[section]
\def\T+{{\mathbb T_d^+}}
\def\A{\mathcal{A}}
\def\rank {\mathop {\rm rank}\nolimits}
\def\Im {\mathop {\rm Im}\nolimits}
\def\N{{\mathcal N}}
\def\supp {\mathop {\rm supp}\nolimits}
\def\aut {\mathop {\rm Aut}\nolimits}
\title[]{On the isomorphisms between evolution algebras of graphs and random walks}
\author{Paula Cadavid, Mary Luz Rodi\~no Montoya and Pablo M. Rodr\'iguez}
\date{}
\address{
\newline
Paula Cadavid, Pablo M. Rodriguez
\newline
Instituto de Ci\^encias Matem\'aticas e de Computa\c{c}\~ao, Universidade de S\~ao Paulo
\newline  
Caixa Postal 668, 13560-970 S\~ao Carlos, SP, Brazil
\newline
e-mails: pacadavid@usp.br, pablor@icmc.usp.br
\newline
\newline
Mary Luz Rodi\~no Montoya
\newline
Instituto de Matem\'aticas - Universidad de Antioquia 
\newline
Calle 67 N$^{\circ}$ 53-108, Medell\'in, Colombia
\newline 
e-mail: mary.rodino@udea.edu.co 
}
\subjclass[2010]{05C25, 17D92, 17D99, 05C81}
\keywords{Evolution Algebra, Random walk, Graph, Isomorphism} 
\begin{document}
  
\maketitle

\begin{abstract}
Evolution algebras are non-associative algebras inspired from biological phenomena, with applications to or connections with different mathematical fields. There are two natural ways to define an evolution algebra associated to a given graph. While one takes into account only the adjacencies of the graph, the other includes probabilities related to the symmetric random walk on the same graph. In this work we state new properties related to the relation between these algebras, which is one of the open problems in the interplay between evolution algebras and graphs. On the one hand, we show that for any graph both algebras are strongly isotopic. On the other hand, we provide conditions under which these algebras are or are not isomorphic. For the case of {\color{black}finite} non-singular graphs we provide a complete description of the problem, while for the case of {\color{black}finite} singular graphs we state a conjecture supported by examples and partial results. {\color{black}The case of graphs with an infinite number of vertices is also discussed.} As a sideline {\color{black}of our work}, we revisit a result existing in the literature about the identification of the automorphism group of an evolution algebra, and we give an improved version of it.
\end{abstract}


\section{Introduction}

In this paper we study evolution algebras, which are a new type of non-associative algebras. These algebras were introduced around ten years ago by Tian \cite{tian} and were motivated by evolution laws of genetics. With this application in mind, if one think in alleles as generators of algebras, then reproduction in genetics is represented by multiplication in algebra. The best general reference of the subject is \cite{tian}, where the reader can found a review of preliminary definitions and properties, connections with other fields of mathematics, and a list of interesting open problems some of which remain unsolved so far. We refer the reader also to \cite{tian2} for an update of open problems in the Theory of Evolution Algebras, and to \cite{PMP}-\cite{Falcon/Falcon/Nunez/2017} and references therein for an overview of recent results on this topic. Formally, an evolution algebra is defined as follows.

\begin{defn}\label{def:evolalg}
Let $\A:=(\A,\cdot\,)$ be an algebra over a field $\mathbb{K}$. We say that $\A$ is an evolution algebra if it admits a countable basis $S:=\{e_1,e_2,\ldots , e_n,\ldots\}$, such that

\begin{equation}\label{eq:ea}
\begin{array}{ll}
e_i \cdot e_i =\displaystyle \sum_{k} c_{ik} e_k,&\text{for any }i,\\[.3cm]
e_i \cdot e_j =0,&\text{if }i\neq j.
\end{array}
\end{equation} 

\smallskip
\noindent
The scalars $c_{ik}\in \mathbb{K}$ are called the structure constants of $\mathcal{A}$ relative to $S$.
\end{defn}

\smallskip
A basis $S$ satisfying \eqref{eq:ea} is called natural basis of $\mathcal{A}$. $\mathcal{A}$ is real if $\mathbb{K}=\mathbb{R}$, and it is nonnegative if it is real and the structure constants $c_{ik}$ are nonnegative. In what follows, we always assume that $\mathcal{A}$ is real. In addition, if  $0\leq c_{ik}\leq 1$, and 

$$\sum_{k=1}^{\infty}c_{ik}=1,$$ 

\noindent
for any $i$, then $\A$ is called a Markov evolution algebra. In this case, there is an interesting correspondence between the algebra $\A$ and a discrete time Markov chain $(X_n)_{n\geq 0}$ with states space $\{x_1,x_2,\ldots,x_n,\ldots\}$ and transition probabilities given by:
\begin{equation}\label{eq:tranprob}
\nonumber c_{ik}:=\mathbb{P}(X_{n+1}=x_k|X_{n}=x_i),\end{equation}
for $i,k\in \mathbb{N}^*$, and for any $n\in \mathbb{N}$, where $\mathbb{N}^*:=\mathbb{N}\setminus \{0\}$. For the sake of completeness we remind the reader that a discrete-time Markov chain is a sequence of random variables $X_0, X_1, X_2, \ldots, X_n, \ldots$, defined on the same probability space $(\Omega,\mathcal{B},\mathbb{P})$, taking values on the same set $\mathcal{X}$, and such that the Markovian property is satisfied, i.e., for any set of values $\{i_0, \ldots, i_{n-1},x_i, x_{k}\} \subset \mathcal{X}$, and any $n\in \mathbb{N}$, it holds
$$\mathbb{P}(X_{n+1}=x_k|X_0 = i_0, \ldots, X_{n-1}=i_{n-1}, X_{n}=x_i)=\mathbb{P}(X_{n+1}=x_k|X_{n}=x_i).$$ 
Thus defined, in the correspondence between the evolution algebra $\A$ and the Markov chain $(X_n)_{n\geq 0}$ what we have is each state of $\mathcal{X}$ identified with a generator of $S$. For more details about the formulation and properties of Markov chains we refer the reader to \cite{karlin/taylor,ross}. In addition, for a review of results related to the connection between Markov chains and evolution algebras we suggest \cite[Chapter 4]{tian}.

\smallskip
In this work we are interested in studying evolution algebras related to graphs in a sense to be specified later. This interplay, i.e. evolution algebras and graphs, has attained the attention of many researchers in recent years. For a review of recent results, see for instance \cite{PMP,PMP2,camacho/gomez/omirov/turdibaev/2013,Elduque/Labra/2015,nunez/2014, nunez/2013}, and references therein. The rest of the section is subdivided into two parts. In the first one we review some of the standard notation of Graph Theory, while in the last one we give the definition of two different evolution algebras associated to a given graph. One of the open questions of the Theory of Evolution Algebras is to understand the relation between both induced algebras. The purpose of this paper is to advance in this question. 


\subsection{Basic notation of Graph Theory}

A graph $G$ with $n$ vertices is a pair $(V,E)$ where $V:=\{1,\ldots,n\}$ is the set of vertices and $E:=\{(i,j)\in V\times V:i\leq j\}$ is the set of edges. If $(i,j)\in E$ or $(j,i)\in E$ we say that $i$ and $j$ are neighbors; we denote the set of neighbors of vertex $i$ by $\mathcal{N}(i)$ and the cardinality of this set by $\deg(i)$. Our definitions as well as our results, except when indicated, also hold for graphs with an infinite number of vertices, i.e. $V$ is a countable set and $|V|=\infty$. In that case we assume as an additional condition for the graph to be locally finite, i.e. $\deg(i)< \infty$ for any $i\in V$. In general, if $U\subseteq V$, we denote $\mathcal{N}(U) :=\{ j \in V : j\in \mathcal{N}(i) \text{ for some } i\in U\}$. We say that $G$ is a $d$-regular graph if $\deg(i) = d$ for any $i\in V$ and some positive integer $d$. We say that $G$ is a bipartite graph if its vertices can be divided into two disjoint sets, $V_1$ and $V_2$, such that every edge connects a vertex in $V_1$ to one in $V_2$. If $V_1$ has $m$ vertices, $V_2$ has $n$ vertices and every possible edge that could connect vertices in different subsets is part of the graph we call $G$ a complete bipartite graph and denote it by $K_{m,n}$. Moreover, we say that $G$ is a biregular graph if it is a bipartite graph $G=(V_1,V_2,E)$ for which every two vertices on the same side of the given bipartition have the same degree as each other. In this case, if the degree of the vertices in $V_1$ is $d_1$ and the degree of the vertices in $V_2$ is $d_2$, then we say that the graph is $(d_{1},d_2)$-biregular  (see Fig. \ref{fig:bipartite}). We notice that the family of biregular graphs includes any finite graph which may be seen as a bipartite graph with partitions $V_1$ and $V_2$ of sizes $m$ and $n$ respectively, for $m,n\geq 1$, such that $\deg(i) =d_1$ if $i\in V_1$, $\deg(i) =d_2$ if $i\in V_2$, where $d_1,d_2 \in \mathbb{N}$ satisfy $m\, d_1 = n\, d_2$, see Fig. \ref{fig:bipartite}(b). In addition, the class of biregular graphs includes some families of infinite graphs like $2$-periodic trees (see Fig. \ref{fig:tree}(a)) and $\mathbb{Z}^2$-periodic graphs with hexagonal lattice (see Fig. \ref{fig:tree}(b)).

\begin{figure}[h!]
\begin{center}

\subfigure[][The set of vertices may be partitioned into the two subsets $V_1=\{5,6,7,8,9,10\}$ and $V_2=\{1,2,3,4\}$, with degrees $2$ and $3$, resp.]{

\begin{tikzpicture}[scale=0.8]

\draw (-1.5,3) -- (-3,1.5)--(-1.5,0)--(-1.5,1.5)--(-1.5,3)--(0,3)--(1.5,3)--(3,1.5)--(1.5,0)--(0,0)--(-1.5,0);
\draw (1.5,3)--(1.5,0);

\filldraw [black] (0,0) circle (2pt);
\draw (0,-0.2) node[below,font=\footnotesize] {$7$};
\filldraw [black] (1.5,0) circle (2pt);
\draw (1.5,-0.2) node[below,font=\footnotesize] {$3$};
\filldraw [black] (-1.5,0) circle (2pt);
\draw (-1.5,-0.2) node[below,font=\footnotesize] {$4$};
\filldraw [black] (1.5,1.5) circle (2pt);
\draw (1.8,2) node[font=\footnotesize] {$6$};
\filldraw [black] (-1.5,1.5) circle (2pt);
\draw (-1.8,1.5) node[font=\footnotesize] {$8$};
\filldraw [black] (1.5,3) circle (2pt);
\draw (1.5,3.2) node[above,font=\footnotesize] {$2$};
\filldraw [black] (-1.5,3) circle (2pt);
\draw (-1.5,3.2) node[above,font=\footnotesize] {$1$};
\filldraw [black] (0,3) circle (2pt);
\draw (0,3.2) node[above,font=\footnotesize] {$5$};
\filldraw [black] (3,1.5) circle (2pt);
\draw (3.3,1.5) node[font=\footnotesize] {$9$};
\filldraw [black] (-3,1.5) circle (2pt);
\draw (-3.3,1.5) node[font=\footnotesize] {$10$};

\filldraw [white] (0,-1) circle (2pt);

    \end{tikzpicture}
}\qquad\qquad\qquad  \subfigure[][Representation of the finite $(2,3)$-biregular graph as a bipartite graph.]{

\begin{tikzpicture}[scale=0.8]

\draw (-2,1)--(2,2);
\draw (-2,1)--(2,-1);
\draw (-2,1)--(2,-3);

\draw (-2,0)--(2,2);
\draw (-2,0)--(2,1);
\draw (-2,0)--(2,-2);

\draw (-2,-1)--(2,1);
\draw (-2,-1)--(2,0);
\draw (-2,-1)--(2,-2);

\draw (-2,-2)--(2,0);
\draw (-2,-2)--(2,-1);
\draw (-2,-2)--(2,-3);

\filldraw [white] (4,0) circle (2pt);
\filldraw [white] (-3.5,0) circle (2pt);

\filldraw [black] (-2,-2) circle (2pt);
\draw (-2.3,-2) node[font=\footnotesize] {$4$};
\filldraw [black] (-2,1) circle (2pt);
\draw (-2.3,1) node[font=\footnotesize] {$1$};
\filldraw [black] (-2,-1) circle (2pt);
\draw (-2.3,-1) node[font=\footnotesize] {$3$};
\filldraw [black] (-2,0) circle (2pt);
\draw (-2.3,0) node[font=\footnotesize] {$2$};

\filldraw [black] (2,2) circle (2pt);
\draw (2.3,2) node[font=\footnotesize] {$5$};
\filldraw [black] (2,1) circle (2pt);
\draw (2.3,1) node[font=\footnotesize] {$6$};
\filldraw [black] (2,0) circle (2pt);
\draw (2.3,0) node[font=\footnotesize] {$7$};
\filldraw [black] (2,-1) circle (2pt);
\draw (2.3,-1) node[font=\footnotesize] {$8$};
\filldraw [black] (2,-2) circle (2pt);
\draw (2.3,-2) node[font=\footnotesize] {$9$};
\filldraw [black] (2,-3) circle (2pt);
\draw (2.3,-3) node[font=\footnotesize] {$10$};

\end{tikzpicture}
}

\end{center}
\caption{A $(2,3)$-biregular graph with $10$ vertices.}\label{fig:bipartite}
\end{figure}
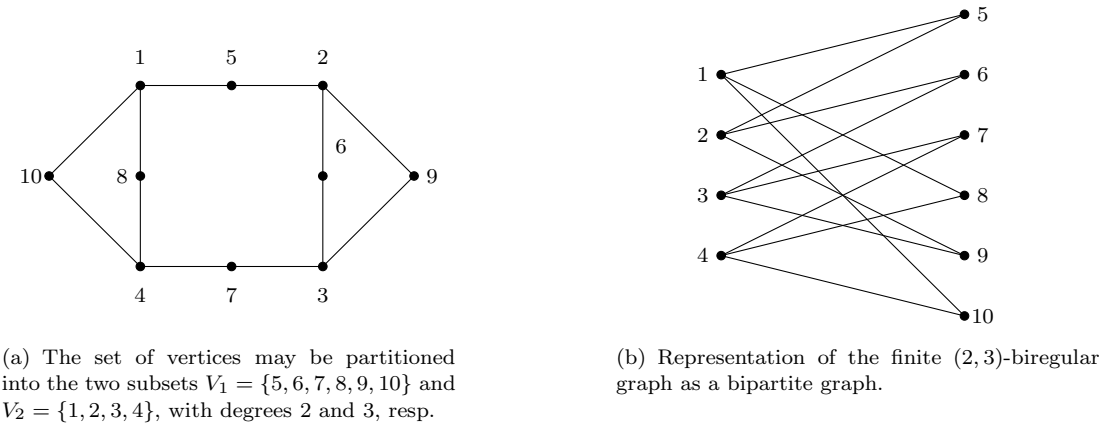

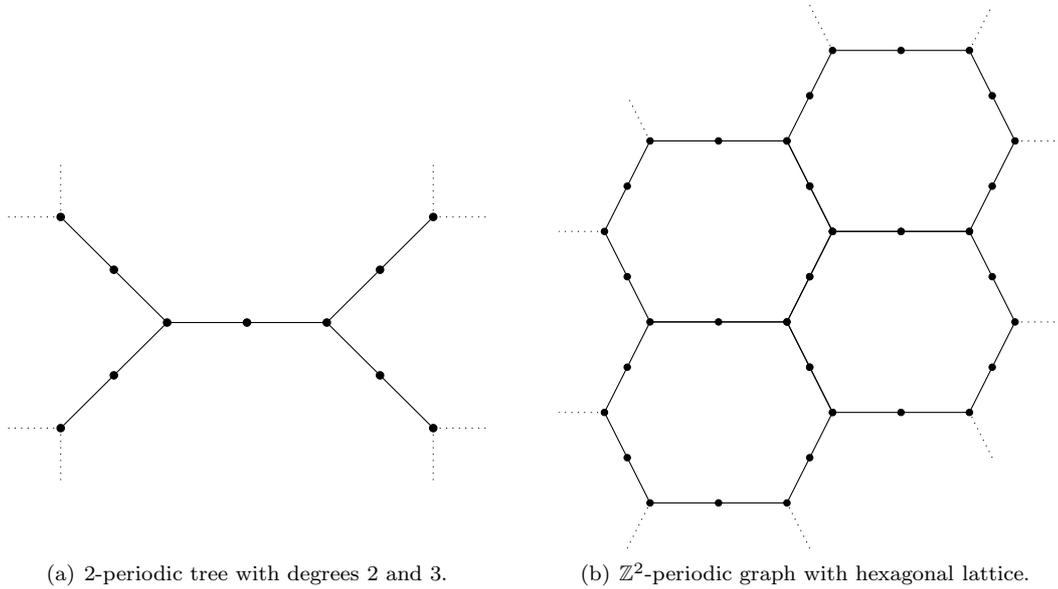
\begin{figure}[h!]
\begin{center}

\subfigure[][$2$-periodic tree with degrees $2$ and $3$.]{

\begin{tikzpicture}[scale=0.7]

\draw (-1.5,0)--(1.5,0);
\draw (-1.5,0)--(-3.5,2);
\draw (-1.5,0)--(-3.5,-2);
\draw (1.5,0)--(3.5,2);
\draw (1.5,0)--(3.5,-2);

\draw [dotted] (3.5,2)--(3.5,3);
\draw [dotted] (3.5,2)--(4.5,2);
\draw [dotted] (3.5,-2)--(3.5,-3);
\draw [dotted] (3.5,-2)--(4.5,-2);
\draw [dotted] (-3.5,2)--(-3.5,3);
\draw [dotted] (-3.5,2)--(-4.5,2);
\draw [dotted] (-3.5,-2)--(-3.5,-3);
\draw [dotted] (-3.5,-2)--(-4.5,-2);

\filldraw [black] (0,0) circle (2pt);
\filldraw [black] (1.5,0) circle (2pt);
\filldraw [black] (-1.5,0) circle (2pt);
\filldraw [black] (2.5,1) circle (2pt);
\filldraw [black] (2.5,-1) circle (2pt);
\filldraw [black] (-2.5,1) circle (2pt);
\filldraw [black] (-2.5,-1) circle (2pt);

\filldraw [black] (3.5,2) circle (2pt);
\filldraw [black] (3.5,-2) circle (2pt);
\filldraw [black] (-3.5,2) circle (2pt);
\filldraw [black] (-3.5,-2) circle (2pt);

\filldraw [white] (0,-4.2) circle (2pt);

\end{tikzpicture}
}\qquad  \subfigure[][$\mathbb{Z}^2$-periodic graph with hexagonal lattice.]{
\begin{tikzpicture}[scale=0.6]

\draw (-1.5,0)--(1.5,0)--(2.5,2)--(1.5,4)--(-1.5,4)--(-2.5,2)--(-1.5,0);

\filldraw [black] (0,0) circle (2pt);
\filldraw [black] (1.5,0) circle (2pt);
\filldraw [black] (-1.5,0) circle (2pt);
\filldraw [black] (1.5,4) circle (2pt);
\filldraw [black] (-1.5,4) circle (2pt);
\filldraw [black] (0,4) circle (2pt);

\filldraw [black] (-2.5,2) circle (2pt);
\filldraw [black] (2.5,2) circle (2pt);

\filldraw [black] (-2,3) circle (2pt);
\filldraw [black] (2,3) circle (2pt);

\filldraw [black] (-2,1) circle (2pt);
\filldraw [black] (2,1) circle (2pt);

\draw (-1.5,4)--(1.5,4)--(2.5,6)--(1.5,8)--(-1.5,8)--(-2.5,6)--(-1.5,4);

\filldraw [black] (0,8) circle (2pt);
\filldraw [black] (1.5,8) circle (2pt);
\filldraw [black] (-1.5,8) circle (2pt);

\filldraw [black] (-2.5,6) circle (2pt);
\filldraw [black] (2.5,6) circle (2pt);

\filldraw [black] (-2,7) circle (2pt);
\filldraw [black] (2,7) circle (2pt);

\filldraw [black] (-2,5) circle (2pt);
\filldraw [black] (2,5) circle (2pt);

\draw (2.5,2)--(5.5,2)--(6.5,4)--(5.5,6)--(2.5,6)--(1.5,4)--(2.5,2);

\filldraw [black] (4,2) circle (2pt);
\filldraw [black] (5.5,2) circle (2pt);
\filldraw [black] (2.5,2) circle (2pt);
\filldraw [black] (5.5,6) circle (2pt);
\filldraw [black] (2.5,6) circle (2pt);
\filldraw [black] (4,6) circle (2pt);

\filldraw [black] (1.5,4) circle (2pt);
\filldraw [black] (6.5,4) circle (2pt);

\filldraw [black] (2,5) circle (2pt);
\filldraw [black] (6,5) circle (2pt);

\filldraw [black] (2,3) circle (2pt);
\filldraw [black] (6,3) circle (2pt);

\draw (2.5,6)--(5.5,6)--(6.5,8)--(5.5,10)--(2.5,10)--(1.5,8)--(2.5,6);

\filldraw [black] (4,6) circle (2pt);
\filldraw [black] (5.5,6) circle (2pt);
\filldraw [black] (2.5,6) circle (2pt);
\filldraw [black] (5.5,10) circle (2pt);
\filldraw [black] (2.5,10) circle (2pt);
\filldraw [black] (4,10) circle (2pt);

\filldraw [black] (1.5,8) circle (2pt);
\filldraw [black] (6.5,8) circle (2pt);

\filldraw [black] (2,9) circle (2pt);
\filldraw [black] (6,9) circle (2pt);

\filldraw [black] (2,7) circle (2pt);
\filldraw [black] (6,7) circle (2pt);

\draw [dotted] (5.5,10)--(6,11);
\draw [dotted] (2,11)--(2.5,10);
\draw [dotted] (6.5,8)--(7.5,8);
\draw [dotted] (6.5,4)--(7.5,4);
\draw [dotted] (-2.5,6)--(-3.5,6);
\draw [dotted] (-2.5,2)--(-3.5,2);

\draw [dotted] (-1.5,8)--(-2,9);
\draw [dotted] (2,-1)--(1.5,0);
\draw [dotted] (6,1)--(5.5,2);

\draw [dotted] (-2,-1)--(-1.5,0);

\end{tikzpicture}}

\end{center}
\caption{Examples of infinite $(2,3)$-biregular graphs.}\label{fig:tree}
\end{figure}

The adjacency matrix of a given graph $G$, denoted by $A:=A(G)$, is an $n\times n$ symmetric matrix $(a_{ij})$ such that $a_{ij}=1$ if $i$ and $j$ are neighbors and $0$, otherwise. Then, we can write $\mathcal{N}(k)=\{\ell \in V: a_{k\ell}=1\},$ for any $k$. Note that the adjacency matrix for infinite graphs is well defined. A graph is said to be singular if its adjacency matrix $A$ is a singular matrix ($\det A =0$), otherwise the graph is said to be non-singular. All the graphs we consider are connected, i.e. for any $i,j\in V$ there exists a positive integer $n$ and a sequence of vertices $\gamma=(i_0,i_1,i_2,\ldots,i_n)$ such that $i_0=i$, $i_n=j$ and $(i_k,i_{k+1})\in E$ for all $k\in\{0,1,\ldots,n-1\}$. The sequence $\gamma$ is called a path connecting $i$ to $j$ with size $n$. The distance between two vertices $i$ and $j$, denoted  by $d(i,j)$, is the size, i.e. number of edges, in the shortest path connecting them. For simplicity, we consider only graphs which are simple, i.e. without multiple edges or loops. 

\smallskip


\subsection{The evolution algebras associated to a graph}

The evolution algebra induced by a graph $G$ is defined in \cite[Section 6.1]{tian} as follows.

\smallskip
\begin{defn}\label{def:eagraph}
Let $G=(V,E)$ a graph with adjacency matrix given by $A=(a_{ij})$. The evolution algebra associated to $G$ is the algebra $\A(G)$ with natural basis $S=\{e_i: i\in V\}$, and relations

\[
 \begin{array}{ll}\displaystyle
e_i \cdot e_i = \sum_{k\in V} a_{ik} e_k,&\text{for  }i \in  V,\\[.3cm]
\end{array}
\] 
\noindent
and $e_i \cdot e_j =0,\text{if }i\neq j.$
\end{defn}

Another way of stating the relation for $e_i \cdot e_i$, for $i\in V$, is to say $e_i^2 = \sum_{k\in \mathcal{N}(i)} e_k$.

\begin{exa}
Let $G$ be the $(2,3)$-biregular graph with $10$ vertices of Fig. \ref{fig:bipartite}. Then $\A(G)$ has natural basis $S=\{e_1,\ldots,e_{10}\}$, and relations

$$
\begin{array}{c}
\mathcal{A}(G): \left\{
\begin{array}{lllll}
 e_1^2= e_5 + e_8 +e_{10}, &  e_2^2=e_5  +e_6 + e_9,&e_3^2= e_6+ e_7+ e_9, & e_4^2=e_7 + e_8 +e_{10},\\[.2cm]
 e_5^2=e_1 + e_2, & e_6^2=e_2+e_3,& e_7^2 = e_3 + e_4, & e_8^2=e_1+ e_4,\\[.2cm]
 e_9^2=e_2+ e_3,& e_{10}^2=e_1+ e_4,  & e_i \cdot e_j =0, i\neq j . \\[.2cm]
\end{array}\right.
\end{array}
$$

\end{exa}

\smallskip
There is a second natural way to define an evolution algebra associated to $G=(V,E)$; it is the one induced by the symmetric random walk (SRW) on $G$. The SRW is a discrete time Markov chain $(X_n)_{n\geq 0}$ with state space given by $V$ and transition probabilities given by
$$\mathbb{P}(X_{n+1}=k|X_{n}=i)=\frac{a_{ik}}{\deg(i)},$$
where $i,k\in V$, $n\in \mathbb{N}$ and, as defined before, $\deg(i)=\sum_{k\in V} a_{ik}.$ Roughly speaking, the sequence of random variables $(X_n)_{n\geq 0}$ denotes the set of positions of a particle walking around the vertices of $G$; at each discrete-time step the next position is selected at random from the set of neighbors of the current one. Since the SRW is a discrete-time Markov chain we may define its related Markov evolution algebra.

\smallskip
\begin{defn}
Let $G=(V,E)$ be a graph with adjacency matrix given by $A=(a_{ij})$. We define the evolution algebra associated to the SRW on $G$ as the algebra $\A_{RW}(G)$ with natural basis $S=\{e_i: i\in V\}$, and relations given by
\[
\begin{array}{ll}\displaystyle
e_i \cdot e_i = \sum_{k\in V}\left( \frac{a_{ik}}{\deg(i)}\right)e_k,&\text{for }i  \in V,
\end{array}
\] 
\noindent
and $e_i \cdot e_j =0, \text{ if } i\neq j.$
\end{defn}

\begin{exa}
Consider again $G$ as being the $(2,3)$-biregular graph with $10$ vertices of Fig. \ref{fig:bipartite}. Then $\A_{RW}(G)$ has natural basis $S=\{e_1,\ldots,e_{10}\}$, and relations

$$
\begin{array}{c}
\mathcal{A}_{RW}(G): \left\{
\begin{array}{lllll}
 e_1^2=  \frac{1}{3}(e_5 + e_8 +e_{10}), &  e_2^2= \frac{1}{3}(e_5  +e_6 + e_9),&e_3^2=  \frac{1}{3}(e_6+ e_7+ e_9), & \\[.3cm]
e_4^2=  \frac{1}{3}(e_7 + e_8 +e_{10}),& e_5^2=\frac{1}{2} (e_1 + e_2), & e_6^2=\frac{1}{2}(e_2+e_3),\\[.3cm]
 e_7^2 =\frac{1}{2}( e_3 + e_4), & e_8^2= \frac{1}{2}(e_1+ e_4),& e_9^2= \frac{1}{2}(e_2+ e_3),& \\[.3cm]
e_{10}^2=\frac{1}{2}(e_1+ e_4),  & e_i \cdot e_j =0, i\neq j . \\[.3cm]

\end{array}\right.
\end{array}
$$

\end{exa}

\smallskip
The aim of this paper is to contribute with the discussion about the relation between the algebras $\A_{RW}(G)$ and  $\A(G)$ for a given graph $G$. We emphasize that this is one of the open problems stated by \cite{tian,tian2}, and which has been addressed partially by \cite{PMP}. Our approach will be the statement of conditions under which we can guarantee the existence or not of isomorphisms between these evolution algebras.


\section{Isomorphisms}

\subsection{Main results}

Before to address with the existence of isomorphisms between $\A_{RW}(G)$ and  $\A(G)$ for a given graph $G$, we start with a more general concept which is the isotopism of algebras introduced by Albert \cite{albert} as a generalization of that of isomorphism. This has been recently applied by \cite{Falcon/Falcon/Nunez/2017} to study two-dimensional evolution algebras.

\smallskip
\begin{defn}\label{def:isoto} \cite[Section 2.1]{Falcon/Falcon/Nunez/2017}
Let $\mathcal{A}$ and $\mathcal{B}$ be two evolution algebras over a field $\mathbb{K}$, and let $S=\{e_i: i\in V\}$ be a natural basis for $\mathcal{A}$. We say that a triple $(f,g,h)$, where $f,g,h$ are three non-singular $\mathbb{K}$-linear transformations 
from $\mathcal{A}$ into $\mathcal{B}$ is an isotopism if 
$$f(u)\cdot g(v) = h(u\cdot v),\;\;\; \text{ for all }u, v \in\mathcal{A}.$$
In this case we say that $\mathcal{A}$ and $\mathcal{B}$ are isotopic. In addition, the triple is called 
\begin{enumerate}[label=(\roman*)]
\item a strong isotopism if $f=g$ and we say that the algebras are strongly isotopic; 
\item an isomorphism if $f=g=h$ and we say that the algebras are isomorphic.
\end{enumerate}
\end{defn}

\smallskip
In the case of an isomorphism we write $f$ instead of $(f, f, f)$. To be isotopic, strongly isotopic or isomorphic are equivalence relations among algebras, and we denote these three relations, respectively, by $\sim$, $\simeq$ and $\cong$. The concept of isotopism allows a first formal connection to be found between $\A_{RW}(G)$ and  $\A(G)$.

\smallskip
\begin{theorem}
For any graph $G$, $\A(G)\simeq \A_{RW}(G)$. 
\end{theorem}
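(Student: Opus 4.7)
The plan is to construct the strong isotopism explicitly in the natural bases. Let $\{e_i\}_{i\in V}$ denote the natural basis of $\mathcal{A}(G)$ and $\{\tilde{e}_i\}_{i\in V}$ the natural basis of $\mathcal{A}_{RW}(G)$. Since $G$ is connected (and, in the infinite case, locally finite) we have $1 \leq \deg(i) < \infty$ for every $i \in V$, so $\sqrt{\deg(i)}$ is a well-defined nonzero real number. I therefore set out to find diagonal maps of the form $f(e_i) = \alpha_i \tilde{e}_i$ and $h(e_i) = \beta_i \tilde{e}_i$ and to tune $\alpha_i, \beta_i$ so that the defining identity of Definition \ref{def:isoto} holds.

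The key step is to match coefficients. Because $e_i \cdot e_j = 0 = \tilde{e}_i \cdot \tilde{e}_j$ whenever $i \neq j$, the identity $f(e_i)\cdot f(e_j) = h(e_i \cdot e_j)$ is automatic for $i \neq j$. For $i = j$ one obtains
\[
f(e_i)\cdot f(e_i) = \alpha_i^2 \sum_{k \in V} \frac{a_{ik}}{\deg(i)} \tilde{e}_k, \qquad h(e_i\cdot e_i) = \sum_{k \in V} a_{ik} \beta_k \tilde{e}_k,
\]
so that equality reduces to requiring $\alpha_i^2 / \deg(i) = \beta_k$ whenever $a_{ik} = 1$. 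The choice
\[
\alpha_i := \sqrt{\deg(i)}, \qquad \beta_k := 1
\]
satisfies this relation simultaneously for every pair $(i,k)$ and makes both $f$ and $h$ non-singular $\mathbb{R}$-linear transformations. Extending the identity from basis pairs to arbitrary $u,v \in \mathcal{A}(G)$ is a routine bilinearity argument, giving $f(u)\cdot f(v) = h(u\cdot v)$ on all of $\mathcal{A}(G)$.

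I expect there to be essentially no obstacle once the scaling $\alpha_i = \sqrt{\deg(i)}$ is guessed: that factor is exactly what absorbs the normalization $1/\deg(i)$ appearing in $\mathcal{A}_{RW}(G)$ and leaves the adjacency-matrix coefficients of $\mathcal{A}(G)$ on the right-hand side. The only point worth flagging is that we genuinely need the extra freedom provided by the strong isotopism (i.e., allowing $h \neq f$); demanding $f = h$ would force $\alpha_i = \alpha_i^2/\deg(i)$ for all $i$ and hence $\alpha_i = \deg(i)$, which is not the right renormalization and, more importantly, would not furnish an isomorphism except in very special cases—this is precisely the subtlety that motivates the later study of genuine isomorphisms in the paper. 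Finally, the argument is insensitive to whether $V$ is finite or countably infinite, so the statement holds uniformly in both regimes.
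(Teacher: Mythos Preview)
Your proof is correct and essentially identical to the paper's own argument: both define the strong isotopism $(f,f,h)$ from $\mathcal{A}(G)$ to $\mathcal{A}_{RW}(G)$ by $f(e_i)=\sqrt{\deg(i)}\,e_i$ and $h(e_i)=e_i$, and verify the identity on basis pairs. The only minor inaccuracy is in your closing commentary (if $f=h$ the constraint is $\alpha_k=\alpha_i^2/\deg(i)$ for neighbors $k$ of $i$, not $\alpha_i=\alpha_i^2/\deg(i)$), but this does not affect the proof itself.
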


\begin{proof}
Consider two $\mathbb{K}$-linear maps, $f$ and $h$, from  $\A(G)$ to $\A_{RW}(G)$ defined by
$$f(e_i)=\sqrt{\deg(i)}\,e_i,\;\;\;\text{ and }\;\;\;h(e_i)=e_i,\;\;\;\text{ for all }i\in V.$$
Then, for $i\neq j$, $f(e_i)\cdot f(e_j)=\sqrt{\deg(i) \deg(j)} \left(e_i\cdot e_j \right)=0 = h(e_i\cdot e_j).$
On the other hand, for any $i\in V$, we have
$$f(e_i)\cdot f(e_i)=\deg(i) \, e_i^2 =\deg(i) \sum_{k\in V}\left( \frac{a_{ik}}{\deg(i)}\right)e_k =\sum_{k\in V} a_{ik} \, e_k,$$
while 
$$h(e_i^2)=h\left(\sum_{k\in V}  a_{ik}\,e_k\right)=\sum_{k\in V}  a_{ik} \,e_k,$$
and the proof is completed.
\end{proof}

Our next step is to obtain conditions on $G$ under which one have the existence or not of isomorphisms between $\A_{RW}(G)$ and $\A(G)$. This issue has been considered recently in \cite{PMP} for some well-known families of graphs. However, there is still a need for general results to address this question. The main result of the present work is a complete characterization of the problem for the case of {\color{black} finite} non-singular graphs.

\begin{theorem}\label{theo:criterio}
Let $G$ be a finite non-singular graph. $\A_{RW}(G)\cong \A(G)$ if, and only if, $G$ is a regular or a biregular graph. Moreover, if $\A_{RW}(G)\ncong \A(G)$ then the only homomorphism between them is the null map.
\end{theorem}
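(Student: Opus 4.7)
The plan is to classify all nonzero homomorphisms $\phi\colon \A(G)\to \A_{RW}(G)$ when $G$ is finite and non-singular, and to show that any such homomorphism is forced to be an isomorphism whose existence is equivalent to $G$ being regular or biregular. This classification yields both the ``if and only if'' and the ``only the null map'' parts of the theorem simultaneously (the reverse direction $\A_{RW}(G)\to\A(G)$ being handled by a symmetric argument).

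Writing $\phi(e_i)=\sum_j \beta_{ji}e_j$, I would first expand $\phi(e_i)\cdot\phi(e_j)$ in $\A_{RW}(G)$ and use $e_i\cdot e_j=0$ in $\A(G)$ to get, for $i\neq j$,
\[
\sum_l\Big(\sum_k \beta_{ki}\beta_{kj}\frac{a_{kl}}{\deg(k)}\Big) e_l=0.
\]
This says $M^{T}(\beta_{\cdot i}\odot \beta_{\cdot j})=0$, where $M=D^{-1}A$ is the structure matrix of $\A_{RW}(G)$, $D$ is the diagonal matrix of degrees, and $\odot$ is the entrywise (Hadamard) product. Non-singularity of $G$ makes $A$, and hence $M$, invertible, so $\beta_{\cdot i}\odot \beta_{\cdot j}=0$: the columns of $B=(\beta_{ji})$ have pairwise disjoint supports, equivalently each row of $B$ has at most one nonzero entry. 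Combining with the multiplicative condition $\phi(e_i^2)=\phi(e_i)^2$, if some $\phi(e_i)=0$ then $\sum_{m\in\N(i)}\phi(e_m)=0$ and the disjoint supports force $\phi(e_m)=0$ for every $m\sim i$; propagation along paths together with connectedness gives $\phi\equiv 0$. So a nonzero $\phi$ has $\phi(e_i)\neq 0$ for all $i$, and a pigeonhole count in the finite-dimensional setting then forces each row of $B$ to contain \emph{exactly} one nonzero entry. Thus $\phi(e_i)=\alpha_i e_{\sigma(i)}$ with $\alpha_i\neq 0$ and $\sigma$ a permutation of $V$.

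Substituting this into $\phi(e_i^2)=\phi(e_i)^2$ and comparing coefficients yields
\[
\alpha_i^2\,\frac{a_{\sigma(i),\sigma(k)}}{\deg(\sigma(i))}=a_{ik}\,\alpha_k,\qquad i,k\in V.
\]
Matching zero/nonzero patterns (all $\alpha_i\neq 0$) shows $a_{\sigma(i),\sigma(k)}=a_{ik}$, so $\sigma$ is a graph automorphism and $\deg(\sigma(i))=\deg(i)$; the relation reduces to $\alpha_k=\alpha_i^2/\deg(i)$ for every edge $i\sim k$. Swapping the roles of $i$ and $k$ and multiplying gives $\alpha_i^3=\deg(i)^2\deg(k)$ whenever $i\sim k$, so all neighbors of any vertex share a common degree. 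By connectedness this forces $G$ to be regular (a single degree) or biregular (two degrees; the bipartition is automatic since an odd cycle would make both degrees equal).

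Conversely, I would exhibit explicit isomorphisms in the two admissible cases. If $G$ is $d$-regular, $f(e_i)=d\,e_i$ trivially satisfies $f(e_i^2)=d\sum_k a_{ik}e_k=f(e_i)^2$. If $G$ is $(d_1,d_2)$-biregular with parts $V_1,V_2$, set $\alpha_i=(d_1^2 d_2)^{1/3}$ on $V_1$ and $\alpha_i=(d_1 d_2^2)^{1/3}$ on $V_2$ (real cube roots exist since $\mathbb{K}=\mathbb{R}$); a direct check confirms $f(e_i)=\alpha_i e_i$ is an isomorphism. The main technical point, and what I expect to require the most care, is the very first step: the non-singular hypothesis on $G$ must be used precisely to force disjoint supports of the $\phi(e_i)$, and without it the subsequent rigid structure $\phi(e_i)=\alpha_i e_{\sigma(i)}$ cannot be obtained.
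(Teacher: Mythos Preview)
Your proposal is correct and follows essentially the same route as the paper: non-singularity of $A$ (equivalently of $D^{-1}A$) forces the images $\phi(e_i)$ to have pairwise disjoint supports, connectedness propagates any vanishing to give the null map, a pigeonhole count yields the monomial form $\phi(e_i)=\alpha_i e_{\sigma(i)}$, and the multiplicative relations then pin down the degree condition and the explicit isomorphisms in the regular/biregular cases. Your derivation is marginally sharper in two places---you observe directly that $\sigma$ is a graph automorphism (the paper only extracts $\deg(\pi(i))=\deg(i)$, with a small imprecision in writing $\mathcal{N}(i)=\mathcal{N}(\pi(i))$ rather than $\pi(\mathcal{N}(i))=\mathcal{N}(\pi(i))$), and your identity $\alpha_i^{3}=\deg(i)^{2}\deg(k)$ for $i\sim k$ gives the ``all neighbors share a degree'' conclusion in one line---but the overall architecture is the same as the paper's combination of Propositions~\ref{theo:generalization}, \ref{theo:sufficient}, and \ref{theo:principal}.
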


\begin{remark}
We are restricting our attention on the existence or not of algebra isomorphisms in the sense of Definition \ref{def:isoto}(ii). We empathize that our results can be easily adapted to deal with evolution homomorphisms or evolution isomorphisms. According to Tian, see \cite{tian}, the concept of evolution homomorphism is related to the one of homomorphism of algebras with an additional condition. More precisely, if $ \mathcal{A}$ and $\mathcal{B}$ are two evolution algebras over a field $\mathbb{K}$ and  $S=\{e_i: i\in V\}$  is a natural basis for $ \mathcal{A}$, then \cite[Definition 4]{tian} say that a linear transformation $g: \mathcal{A} \longrightarrow \mathcal{B}$ is an evolution homomorphism, if $g(a\cdot b)=g(a)\cdot g(b)$ for all $a,b \in  \mathcal{A} $  and  $\{g(e_i):i\in V\}$ can be complemented to a natural basis for $\mathcal{B}$. Furthermore, if an evolution homomorphism is one to one and onto, it is an evolution isomorphism. Using the terminology in \cite{Cabrera/Siles/Velasco} we can rewrite the definition of Tian by saying that an evolution homomorphism $g: \mathcal{A} \longrightarrow \mathcal{B}$ between evolution algebras $ \mathcal{A}$ and $\mathcal{B}$ is an  homomorphism such that the evolution algebra $\Im(f)$ has the extension property.  
\end{remark}

The proof of Theorem \ref{theo:criterio} rely on a mix of results which holds for general graphs, meaning not necessarily {\color{black}finite} and non-singular graphs, together with a description of the isomorphisms for the case of {\color{black} finite }non-singular graphs. For the sake of clarity we left the proof for the next section. In what follows we discuss some examples.

\begin{exa}Friendship graph $F_n$. Let us consider the friendship graph $F_n$, which is a finite graph with $2n+1$ vertices and $3n$ edges constructed by joining $n$ copies of the triangle graph with a common vertex (see Figure \ref{fig:friendhsipproof}). We shall see that $\rank (A)= n$, which implies by  Theorem \ref{theo:criterio}, because the graph is neither regular nor biregular,  that if $f:\mathcal{A}(G)\longrightarrow \mathcal{A}_{RW}(G)$ is an homomorphism, then $f$ is the null map. This results has been stated in \cite[Proposition 3.4]{PMP}, and therefore it is a Corollary of our Theorem  \ref{theo:criterio}.

We assume the vertices of $F_n$ labelled as in Figure \ref{fig:friendhsipproof}, with the central vertex labelled by $2n+1$.

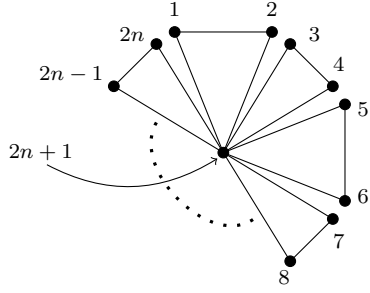
\begin{figure}[h!]\label{fig:friendhsipproof}
\begin{center}
\begin{tikzpicture}[scale=0.8]

\draw (0,0) -- (2,0.8)--(2,-0.8)--(0,0);
\draw (0,0) -- (-0.8,2)--(0.8,2)--(0,0);
\draw (0,0) -- (-1.8,1.1)--(-1.1,1.8)--(0,0);
\draw (0,0) -- (1.8,1.1)--(1.1,1.8)--(0,0);
\draw (0,0) -- (1.8,-1.1)--(1.1,-1.8)--(0,0);

\filldraw [black] (0,0) circle (2.5pt);
\draw (-1.5,1.7) node[above,font=\footnotesize] {$2n$};
\filldraw [black] (2,0.8) circle (2.5pt);
\draw (1.5,1.7) node[above,font=\footnotesize] {$3$};
\filldraw [black] (2,-0.8) circle (2.5pt);
\draw (1,-1.8) node[below,font=\footnotesize] {$8$};
\filldraw [black] (-0.8,2) circle (2.5pt);
\draw (-0.8,2.1) node[above,font=\footnotesize] {$1$};
\filldraw [black] (0.8,2) circle (2.5pt);
\draw (0.8,2.1) node[above,font=\footnotesize] {$2$};

\filldraw [black] (1.1,-1.8) circle (2.5pt);
\filldraw [black] (1.1,1.8) circle (2.5pt);
\filldraw [black] (-1.1,1.8) circle (2.5pt);

\filldraw [black] (-1.8,1.1) circle (2.5pt);
\draw (-2.5,1.3) node[font=\footnotesize] {$2n-1$};
\filldraw [black] (1.8,1.1) circle (2.5pt);
\draw (1.9,1.2) node[above,font=\footnotesize] {$4$};
\filldraw [black] (1.8,-1.1) circle (2.5pt);
\draw (2.3,1) node[below,font=\footnotesize] {$5$};
\draw (2.3,-1) node[above,font=\footnotesize] {$6$};
\draw (1.9,-1.2) node[below,font=\footnotesize] {$7$};

\draw (-3,0) node[font=\footnotesize] {$2n+1$};
\path [->, bend right]  (-2.9,-0.2) edge (-0.1,-0.1);

\draw [very thick,loosely dotted]    (-1.1,0.5) to[out=240,in=-145] (0.5,-1.1);

\end{tikzpicture}

\caption{Friendship graph.}
\end{center}
\end{figure}

Then the adjacency matrix $A$ of the graph has elements
$$a_{ij}=\left\{
\begin{array}{cl}
1,& \text{ if }i\text{ is odd (even) and }j=i+1\; (j=i-1),\\[.2cm] 
1,& \text{ if } i=2n+1\text{ and }j\in\{1,\ldots, n\}\\[.2cm]
0,& \text{ other case.}
\end{array}\right.
$$

That is, $A$ is given by

$$
\begin{bmatrix}
0&1&0&0&\cdots&0&1\\[.2cm]
1&0&0&0&\cdots &0&1\\[.2cm]
0&0&0&1&\cdots &0&1\\[.2cm]
\vdots & \vdots& \vdots&\vdots &\ddots & \vdots&\vdots\\[.2cm]
0&0&0&0&\cdots &0&1\\[.2cm]
1&1&1&1&\cdots &1&0
\end{bmatrix}.
$$

Denote by $C_i$ the $ith$-column of matrix $A$, for $i\in\{1,\ldots,2n+1\}$, and assume that  $\sum_{i=1}^{2n+1} \alpha_i C_i = 0,$ where $\alpha_i$ is a constant, for  $i\in\{1,\ldots,2n+1\}$. Now it is not difficult to see that the following equations hold:
\begin{eqnarray}
\alpha_k + \alpha_{2n+1} = 0,& \text{ for }k\in\{1,\ldots,2n\},\label{eq:frien1}\\[.2cm]
\sum_{i=1}^{2n}\alpha_i =0.\label{eq:frien2}&
\end{eqnarray}
Thus, by adding \eqref{eq:frien1} and \eqref{eq:frien2} we obtain $\sum_{i=1}^{2n}\alpha_i + 2n (\alpha_{2n+1})=0$ which, together with \eqref{eq:frien2}, implies $\alpha_{2n+1}=0$. Thus we can conclude, now from \eqref{eq:frien1}, that $\alpha_k =0$ also for $k\in\{1,\ldots,2n\}$. This in turns implies that $\{C_1,\ldots,C_{2n+1}\}$ forms a linearly independent set of vectors and hence $\rank(A)=n$.

\end{exa}

\smallskip
A natural question that needs to be raised is if the result stated in Theorem \ref{theo:criterio} holds for {\color{black}finite} singular graphs also. In the sequel we provide some examples suggesting a positive answer. 

\smallskip
\begin{exa}\label{exa:regular}
Consider the $3$-regular graph $G$ represented as in Fig. 2.2. 
  
\begin{figure}[!h]
\begin{center}
\begin{tikzpicture}[scale=0.7]

\draw (-1,1) -- (-3,1) -- (-3,-1)--(-1,-1);
\draw (3,1) -- (5,1) -- (5,-1)--(3,-1)--(5,1);
\draw (3,1) -- (5,-1);
\draw (-3,1) -- (-1,-1);
\draw (-3,-1) -- (-1,1);
\draw (2,0) --(3,-1);
\draw (2,0) --(3,1);
\draw (0,0) -- (-1,1);
\draw (0,0) -- (-1,-1);
\draw (0,0) -- (2,0);


\filldraw [black] (0,0) circle (2.5pt); \draw (0.3,-0.3) node {$1$};
\filldraw [black] (-1,1) circle (2.5pt); \draw (-1,1.4) node {$2$};
\filldraw [black] (-3,1) circle (2.5pt); \draw (-3,1.4) node {$3$};
\filldraw [black] (-3,-1) circle (2.5pt); \draw (-3,-1.4) node {$4$};
\filldraw [black] (-1,-1) circle (2.5pt); \draw (-1,-1.4) node {$5$};
\filldraw [black] (2,0) circle (2.5pt); \draw (1.7,-0.3) node {$6$};
\filldraw [black] (3,1) circle (2.5pt); \draw (3,1.4) node {$7$};
\filldraw [black] (5,1) circle (2.5pt); \draw (5,1.4) node {$8$};
\filldraw [black] (5,-1) circle (2.5pt); \draw (5,-1.4) node {$9$};
\filldraw [black] (3,-1) circle (2.5pt); \draw (3,-1.4) node {$10$};

\end{tikzpicture}
\caption{$3$-regular singular graph.}
\label{FIG:3reg}
\end{center}
\end{figure}
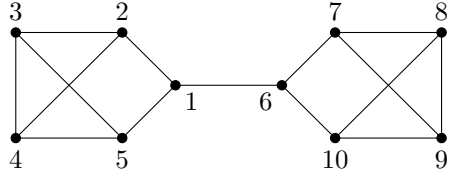

\noindent
The evolution algebras induced by $G$, and by the random walk on $G$, respectively, have natural basis $\{e_1,e_2,e_3,e_4,e_5\}$ and relations given by:

\smallskip
$$
\begin{array}{cc}
\mathcal{A}(G): \left\{
\begin{array}{l}
 e_1^2= e_2 + e_5 + e_6,\\[.2cm]
 e_2^2=e_5^2= e_1 + e_3 + e_4,\\[.2cm]
 e_3^2=e_2 + e_4+ e_5,\\[.2cm]
 e_4^2=e_2 + e_3+ e_5,\\[.2cm]
 e_6^2=e_1+e_7+e_{10},\\[.2cm]
e_7^2 = e_{10}^2=e_6 + e_8 + e_9,\\[.2cm]
e_8^2=e_7 + e_9 + e_{10},\\[.2cm]
 e_i \cdot e_j =0, i\neq j,
\end{array}\right.
&
\mathcal{A}_{RW}(G): \left\{
\begin{array}{l}
 e_1^2= \frac{1}{3}\,e_2 +\frac{1}{3}\, e_5 +\frac{1}{3}\, e_6,\\[.2cm]
 e_2^2=e_5^2=\frac{1}{3}\, e_1 +\frac{1}{3}\, e_3 +\frac{1}{3}\, e_4,\\[.2cm]
 e_3^2=\frac{1}{3}\,e_2 + \frac{1}{3}\,e_4+ \frac{1}{3}\,e_5,\\[.2cm]
 e_4^2=\frac{1}{3}\,e_2 +\frac{1}{3}\, e_3+\frac{1}{3}\, e_5,\\[.2cm]
 e_6^2=\frac{1}{3}\,e_1+\frac{1}{3}\,e_7+\frac{1}{3}\,e_{10},\\[.2cm]
e_7^2 = e_{10}^2=\frac{1}{3}\,e_6 + \frac{1}{3}\,e_8 +\frac{1}{3}\, e_9,\\[.2cm]
e_8^2=\frac{1}{3}\,e_7 + \frac{1}{3}\,e_9 + \frac{1}{3}\,e_{10},\\[.2cm]
 e_i \cdot e_j =0, i\neq j.
\end{array}\right.
\end{array}
$$

\vspace{.5cm}
\noindent
Note that $\mathcal{N}(2)=\mathcal{N}(5)=\{1,3,4\}$, and $\mathcal{N}(7)=\mathcal{N}(10)=\{6,8,9\}$, implies $\det A =0$. Moreover, as $G$ is a $3$-regular graph, we have by \cite[Theorem 3.2(i)]{PMP} that $\mathcal{A}_{RW}(G) \cong \mathcal{A}(G)$. It is not difficult to see that the map $f:\mathcal{A}_{RW}(G)\longrightarrow \mathcal{A}(G)$ defined by $f(e_i) = (1/3)\, e_i$, for any $i\in V$ is an isomorphism.

\end{exa}

\smallskip
\begin{exa}\label{exa:bipartite}
Consider the complete bipartite graph $K_{m,n}$ with partitions of sizes $m\geq 1$ and $n\geq 1$, respectively, and assume that the set of vertices is partitioned into the two subsets $V_1:=\{1,\ldots,m\}$ and $V_2:=\{m+1,\ldots,m+n\}$ (see Fig. 2.3). 

\begin{figure}[!h]
\label{FIG:bipartite}
\begin{center}
\begin{tikzpicture}[scale=0.7]

\draw (0,0) -- (4,-1);
\draw (0,0) -- (4,-4);
\draw (0,0) -- (4,-2.5);

\draw (0,-1) -- (4,-1);
\draw (0,-1) -- (4,-4);
\draw (0,-1) -- (4,-2.5);

\draw (0,-2) -- (4,-1);
\draw (0,-2) -- (4,-4);
\draw (0,-2) -- (4,-2.5);

\draw (0,-3) -- (4,-1);
\draw (0,-3) -- (4,-4);
\draw (0,-3) -- (4,-2.5);

\draw (0,-4) -- (4,-1);
\draw (0,-4) -- (4,-4);
\draw (0,-4) -- (4,-2.5);

\draw (0,-5) -- (4,-1);
\draw (0,-5) -- (4,-4);
\draw (0,-5) -- (4,-2.5);


\filldraw [black] (0,0) circle (2.5pt); \node at (-0.5,0) {$1$};
\filldraw [black] (0,-1) circle (2.5pt); \node at (-0.5,-1) {$2$};
\filldraw [black] (0,-2) circle (2.5pt); \node at (-0.5,-2) {$3$};
\filldraw [black] (0,-3) circle (2.5pt); \node at (-0.5,-3) {$4$};
\filldraw [black] (0,-4) circle (2.5pt); \node at (-0.5,-4) {$5$};
\filldraw [black] (0,-5) circle (2.5pt); \node at (-0.5,-5) {$6$};

\filldraw [black] (4,-2.5) circle (2.5pt); \node at (4.5,-2.5) {$8$};
\filldraw [black] (4,-4) circle (2.5pt); \node at (4.5,-4) {$9$};
\filldraw [black] (4,-1) circle (2.5pt); \node at (4.5,-1) {$7$};

\end{tikzpicture}
\caption{Complete bipartite graph $K_{6,3}$}
\end{center}
\end{figure}
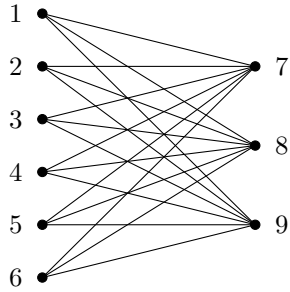

\noindent
It is not difficult to see that $\det A =0$. The associated evolution algebras $\mathcal{A}_{RW}(K_{m,n})$ and $\mathcal{A}(K_{m,n})$ are defined in \cite{PMP}. Indeed, by \cite[Theorem 3.2(ii)]{PMP} we have that  $\mathcal{A}_{RW}(K_{m,n}) \cong \mathcal{A}(K_{m,n})$. Moreover, let $f_{\pi}:\mathcal{A}_{RW}(K_{m,n})\longrightarrow \mathcal{A}(K_{m,n})$ be defined by
 $$
f_{\pi}(e_i)=\left\{
\begin{array}{ll}
m^{-1/3}n^{-2/3} e_{\pi(i)}, &\text{for }i\in V_1;\\[.2cm]
m^{-2/3}n^{-1/3} e_\pi(i), &\text{for }i\in V_2, 
\end{array}\right.
$$
where $\pi \in S_{m+n}$ is such that $\pi(i)\in V_1$ if, and only if, $i\in V_1$. 
\end{exa}

\begin{exa}\label{exa:tree}
A tree $T$ with $m+n+2$ vertices and diameter $3$ may be represented as in Fig. 2.4. The set of vertices may be partitioned in $V_1:=\{1,\ldots,m\}$, $V_2:=\{m+1,\ldots,m+n\}$, and $\{m+n+1,m+n+2\}$, in such a way $\mathcal{N}(i) =\{ n+m+1\}$ for any $i\in V_1$, $\mathcal{N}(i) = \{n+m+2\}$ for any $i\in V_2$, and $n+m+1$ and $n+m+2$ are neighbors. Then $\det A =0$. 

\begin{figure}[!h]
\label{FIG:tree}
\begin{center}
\begin{tikzpicture}[scale=0.7]

\draw (2,0) -- (4,1);

\draw (2,0) --(4,-2);
\draw (2,0) --(4,2);
\draw (2,0) --(4,0);
\draw (0,0) -- (-2,1);
\draw (0,0) -- (-2,2);
\draw (0,0) -- (-2,0);
\draw (0,0) -- (-2,-2);
\draw (0,0) -- (2,0);


\filldraw [black] (0,0) circle (2.5pt); \draw (0,-0.4) node {$u$};
\filldraw [black] (-2,2) circle (2.5pt); \draw (-2.5,2) node {$1$};
\filldraw [black] (-2,0) circle (2.5pt); \draw (-2.5,0) node {$3$};
\draw (-2,-0.8) node {$\vdots$};
\draw (4,-0.8) node {$\vdots$};
\filldraw [black] (2,0) circle (2.5pt); \draw (2,-0.4) node {$v$};
\filldraw [black] (4,2) circle (2.5pt); \draw (5,2) node {$m+1$};
\filldraw [black] (4,0) circle (2.5pt); \draw (5,0) node {$m+3$};
\filldraw [black] (4,1) circle (2.5pt); \draw (5,1) node {$m+2$};
\filldraw [black] (4,-2) circle (2.5pt); \draw (5,-2) node {$m+n$};
\filldraw [black] (-2,-2) circle (2.5pt); \draw (-2.5,-2) node {$m$};
\filldraw [black] (-2,1) circle (2.5pt); \draw (-2.5,1) node {$2$};
\filldraw [white] (-1,2) circle (2.5pt);
\filldraw [white] (-1,-2) circle (2.5pt);
\filldraw [white] (1,2) circle (2.5pt);
\filldraw [white] (1,-2) circle (2.5pt);

\end{tikzpicture}
\caption{Tree with $m+n+2$ vertices and diameter $3$. Here $u:=m+n+1$ and $v:=m+n+2$.}
\end{center}
\end{figure}
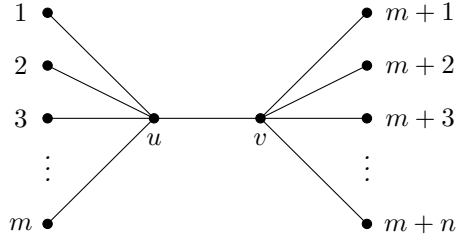

We shall see that this is an example of graph where the only homomorphism is the null map. For the sake of simplicity we consider the case $m=n=2$; the general case could be checked following the same arguments as below with some additional work. For $m=n=2$, the tree $T$ induces the following evolution algebras: take the natural basis $\{e_1,e_2,e_3,e_4,e_5,e_6\}$ and the relations given by

\smallskip
$$
\begin{array}{cc}
\mathcal{A}(T): \left\{
\begin{array}{l}
 e_1^2 = e_2^2 = e_5,\\[.2cm]
  e_3^2 = e_4^2 = e_6,\\[.2cm]
 e_5^2=e_1 + e_{2}+e_6,\\[.2cm]
  e_6^2=e_3 + e_{4}+e_5,\\[.2cm]
 e_i \cdot e_j =0,  i\neq j,
\end{array}\right.
&
\mathcal{A}_{RW}(T): \left\{
\begin{array}{l}
 e_1^2 = e_2^2 = e_5,\\[.2cm]
  e_3^2 = e_4^2 = e_6,\\[.2cm]
 e_5^2=\frac{1}{3}\,e_1 +\frac{1}{3}\, e_{2}+\frac{1}{3}\,e_6,\\[.2cm]
  e_6^2=\frac{1}{3}\,e_3 +\frac{1}{3}\, e_{4}+\frac{1}{3}\, e_5,\\[.2cm]
 e_i \cdot e_j =0,  i\neq j.
\end{array}\right.
\end{array}
$$

Assume $f:\mathcal{A}_{RW}(T) \longrightarrow \mathcal{A}(T)$ is an homomorphism such that for any $i\in\{1,2,3,4,5,6\}$ 
$$f(e_i )= \sum _{k=1} ^{6}t_{ik}e_k,$$
where the $t_{ik}$'s are scalars. Thus, 
\begin{eqnarray}
f(e_1^2) = f(e_2^2) = f(e_5) &=& \sum_{k=1}^{6} t_{5k} e_k,\label{eq:pri12}\\
f(e_3^2) = f(e_4^2) = f(e_6) &=& \sum_{k=1}^{6} t_{6k} e_k,\label{eq:pri34}\\
f(e_5^2) = f\left(\frac{1}{3}(e_1 + e_2 + e_6)\right)  &=& \frac{1}{3}\sum_{k=1}^{6}(t_{1k}+t_{2k}+t_{6k})e_k,\label{eq:pri5}\\
f(e_6^2) = f\left(\frac{1}{3}(e_3 + e_4 + e_5)\right)  &=& \frac{1}{3}\sum_{k=1}^{6}(t_{3k}+t_{4k}+t_{5k})e_k,\label{eq:pri6}\\
f(e_i)\cdot f(e_j) &=&\sum_{k=1}^{6}\left(\sum_{\ell \in \mathcal{N}(k)}t_{i\ell}t_{j\ell}\right) e_k,\label{eq:priij}
\end{eqnarray}
for any $i, j\in V$, which together with 
\begin{equation}\label{eq:isopro}
f(e_i \cdot e_j)=f(e_i)\cdot f(e_j), \text{ for any }i,j\in V,
\end{equation}
imply the following set of equations. If $i\neq j$, then $0=f(e_i\cdot e_j)$ and we obtain by \eqref{eq:priij} and \eqref{eq:isopro}:

\begin{eqnarray}
t_{ik}t_{jk} &=&0, \text{ for }k\in\{5,6\},\label{eq:exT1}\\
t_{ik}t_{jk} + t_{i\,k+1}t_{j\,k+1}&=&0, \text{ for }k\in\{1,3\}.\label{eq:exT2}
\end{eqnarray}

By \eqref{eq:isopro} with $i=j$ and $i\in\{1,2,3,4\}$ we obtain by \eqref{eq:pri12} and \eqref{eq:pri34} the following: if $k=5$ and $\ell \in\{1,2\}$, or if $k=6$ and $\ell\in \{3,4\}$, it holds

\begin{eqnarray}
t_{k5} &=&t_{\ell \,1}^2 + t_{\ell \,2}^2 + t_{\ell \,6}^2,\label{eq:exT3}\\
t_{k6}&=&t_{\ell \,3}^2 + t_{\ell \,4}^2 + t_{\ell \,5}^2,\label{eq:exT4}\\
t_{k1}=t_{k2}&=&t_{\ell 5}^2,\label{eq:exT5}\\
t_{k3}=t_{k4}&=&t_{\ell 6}^2.\label{eq:exT6}
\end{eqnarray}

On the other hand, by   \eqref{eq:priij}  and   \eqref{eq:isopro} with $i=j=5$ and \eqref{eq:pri5} we obtain: if $k=5$ and $\ell \in\{1,2\}$, or if $k=6$ and $\ell\in \{3,4\}$, we get 

\begin{eqnarray}
3\, t_{5k}^2 &=& t_{1 \ell} + t_{2 \ell} + t_{6 \ell},\label{eq:exT7}\\
3(t_{51}^2 + t_{52}^2 + t_{56}^2) &=& t_{15} + t_{25} + t_{65},\label{eq:exT8}\\
3(t_{53}^2 + t_{54}^2 + t_{55}^2) &=& t_{16} + t_{26} + t_{66}.\label{eq:exT9}
\end{eqnarray}

Finally, following \eqref{eq:pri6},  \eqref{eq:priij}   and \eqref{eq:isopro} with $i=j=6$: if $k=5$ and $\ell \in\{1,2\}$, or if $k=6$ and $\ell\in \{3,4\}$

\begin{eqnarray}
3\, t_{6k}^2 &=& t_{3 \ell} + t_{4 \ell} + t_{5 \ell},\label{eq:exT10}\\
3(t_{61}^2 + t_{62}^2 + t_{66}^2) &=& t_{35} + t_{45} + t_{55},\label{eq:exT11}\\
3(t_{63}^2 + t_{64}^2 + t_{65}^2) &=& t_{36} + t_{46} + t_{56}.\label{eq:exT12}
\end{eqnarray}

By \eqref{eq:exT1}, for $k\in\{5,6\}$, we have $t_{ik} =0$ for all $ i\in V$, or there exists at most one $i\in V$ such that $t_{ik}  \not = 0$ and  $t_{jk} =0$ for all $j  \not = i$. This implies, by \eqref{eq:exT5} and \eqref{eq:exT6} that 
\begin{eqnarray}\label{eq:exT13}
t_{ki}=t_{i k}=0,& \text{ for }k\in\{5,6\}\text{ and }i\in \{1,2,3,4\}. 
\end{eqnarray}

\noindent
From now on we shall consider two different cases, namely, $t_{55} = t_{65}=0$ or $t_{55} = 0$ and $t_{65}\neq0$; indeed it should be three cases but the case $t_{55} \neq 0$ and  $t_{65}= 0$ is analogous to the last one. Note that we already have, see \eqref{eq:exT13}, $t_{i5}=0$ for $i\in\{1,2,3,4\}$.\\

\noindent
{\bf Case $1$:} $t_{55} = t_{65}=0$. In this case we get by \eqref{eq:exT3}, that 
\begin{eqnarray}\label{eq:exT14}
t_{i1}=t_{i2}=0,\text{ for }i\in\{1,2,3,4\}.
\end{eqnarray}
In addition, by \eqref{eq:exT8} we have $t_{56}=0$, and this in turns implies by \eqref{eq:exT4}, for $k=5$, 
\begin{eqnarray}\label{eq:exT15}
t_{i3}=t_{i4}=0,&\text{ for }i\in \{1,2\}. 
\end{eqnarray}
Analogously,  \eqref{eq:exT11} implies $t_{66}=0$, which in turns implies by \eqref{eq:exT4}, for $k=6$,
\begin{eqnarray}\label{eq:exT16}
t_{i3}=t_{i4}=0,& \text{ for }i\in\{3,4\}.
\end{eqnarray} 
Therefore, as $t_{i5}=0$ for any $i\in\{1,2,3,4,5,6\}$, $t_{56}=t_{66}=0$, and \eqref{eq:exT13}-\eqref{eq:exT16} hold, we conclude that $f$ is the null map.\\

\noindent
{\bf Case $2$:} $t_{55} = 0$ and  $t_{65}\neq 0$. As before, $t_{55} = 0$ implies by \eqref{eq:exT3}, for $k=5$
\begin{eqnarray}\label{eq:exT17}
t_{i1}=t_{i2}=0,&\text{ for }i\in\{1,2\},
\end{eqnarray}
and by \eqref{eq:exT11} together with \eqref{eq:exT13} we have $t_{66}=0$, which implies \eqref{eq:exT16}. Now, observe that it should be $t_{56}\neq0$; othercase \eqref{eq:exT12} and \eqref{eq:exT13} lead us to $t_{65}=0$, which is a contradiction. So assume $t_{56}\neq0$. By \eqref{eq:exT4}, for $k=5$ we have 
$$2 t_{56} = t_{13}^2+t_{14}^2+t_{23}^2+t_{24}^2 = t_{13}^2+2t_{13}t_{23}+t_{23}^2+t_{14}^2+2t_{14}t_{24}+t_{24}^2,$$
where the last equality comes from \eqref{eq:exT2} for $k=3$. In other words, we have
$$2 t_{56} = (t_{13}+t_{23})^2+(t_{14}+ t_{24})^2,$$
which by \eqref{eq:exT7} for $k=6$, and using that $t_{63}=t_{64}=0$ by \eqref{eq:exT13}, leads us to $2 t_{56} = 9  t_{56}^4$. Then $t_{56}=\left(2/9\right)^{1/3}$. Now \eqref{eq:exT8} and \eqref{eq:exT13} imply $3 t_{56}^2 = t_{65}$, and then $t_{65}=(4/3)^{1/3}\approx 1.1$. On the other hand, we could discover the value of $t_{65}$ following the same steps as the ones for $t_{56}$. In that direction one get by \eqref{eq:exT12} and \eqref{eq:exT13} that $t_{65}=(2/243)^{1/6}\approx 0.45$, which is a contradiction.



Our analysis of Case $2$ lead us to conclude that the only option is the one of Case $1$. Therefore, $f$ must  be the null map.

\end{exa}

Examples \ref{exa:regular}, \ref{exa:bipartite} and \ref{exa:tree} consider different singular graphs. From different arguments and applying previous results we have checked that either there exists an isomorphism between $\mathcal{A}_{RW}(G)$ and $\mathcal{A}(G)$, or the only homomorphism between these algebras is the null map. This leads us to think that Theorem \ref{theo:criterio} holds for {\color{black}finite} singular graphs also. However, further work needs to be carried out to establish whether this is true or not so we state it as a conjecture for future research.

\begin{conjecture}\label{conjecture}
Let $G$ be a {\color{black}finite} graph. $\A_{RW}(G)\cong \A(G)$ if, and only if, $G$ is a regular or a biregular graph. Moreover, if $\A_{RW}(G)\ncong \A(G)$ then the only homomorphism between them is the null map.
\end{conjecture}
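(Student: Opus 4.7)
The plan is to split Conjecture \ref{conjecture} into its two directions and to reuse the framework that gives Theorem \ref{theo:criterio} for non-singular graphs. Note first that the ``moreover'' statement is strictly stronger than the backward implication of the iff: proving that every non-null homomorphism forces $G$ to be regular or biregular covers both halves at once.

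For the direction regular-or-biregular $\Rightarrow$ isomorphic, I would exhibit an explicit diagonal isomorphism $f(e_i)=c_ie_i$. When $G$ is $d$-regular, the choice $c_i\equiv 1/d$ matches the relations of $\A_{RW}(G)$ with those of $\A(G)$; when $G$ is $(d_1,d_2)$-biregular with bipartition $V=V_1\cup V_2$, a short calculation shows that $c_i=d_1^{-2/3}d_2^{-1/3}$ on $V_1$ and $c_i=d_1^{-1/3}d_2^{-2/3}$ on $V_2$ does the job, generalizing the map of Example \ref{exa:bipartite}.

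For the converse, take a homomorphism $f\colon\A_{RW}(G)\to\A(G)$, write $f(e_i)=\sum_\ell t_{i\ell}e_\ell$, and let $T=(t_{i\ell})$. Translating $f(e_i\cdot e_j)=f(e_i)\cdot f(e_j)$ coordinatewise yields, with $D=\mathrm{diag}(\deg(i))$ and $T^{(2)}$ the Hadamard square of $T$, the diagonal identity
\[
D^{-1}A\,T \;=\; T^{(2)}\,A,
\]
together with the off-diagonal identities $\sum_{k\in\mathcal{N}(\ell)} t_{ik}t_{jk}=0$ for every $i\neq j$ and every $\ell\in V$ (i.e.\ the rows of the restricted matrix $T|_{V\times\mathcal{N}(\ell)}$ are pairwise orthogonal for each $\ell$). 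My strategy is to combine these to force $T$ into an essentially diagonal form and then reduce to the diagonal case: setting $\alpha_i:=c_i^2\deg(i)$ for $f(e_i)=c_ie_i$ gives $c_k=\alpha_i$ for every $k\in\mathcal{N}(i)$, so $\alpha$ is constant on every neighborhood; connectedness then forces $\alpha$ either globally constant or constant on each side of a bipartition, and the scalar compatibility $\alpha_i^2=\alpha_j$ pins down regularity or biregularity.

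The main obstacle is the first half of that reduction, namely forcing $T$ into essentially diagonal form when $\det A=0$. In the non-singular case of Theorem \ref{theo:criterio} the off-diagonal system already suffices, because $\{e_k^2\}_k$ is a basis and one reads off directly that the supports of the rows of $T$ are pairwise disjoint. When $A$ is singular, the columns of $T$ can be perturbed along $\ker A$ without affecting $D^{-1}AT=T^{(2)}A$, so genuinely non-diagonal $T$'s become \emph{a priori} admissible; the structural source of this extra freedom is the presence of twin vertices (as in Example \ref{exa:regular}) or of repeated pendant configurations (as in Example \ref{exa:tree}). A natural line of attack is to quotient $G$ by the twin-vertex equivalence relation, apply Theorem \ref{theo:criterio} (or an iterated version) on the quotient, and then lift back by showing that any non-null homomorphism on $G$ descends to one on the quotient and that regular/biregular lifts correspond to regular/biregular originals. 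Executing this descent cleanly -- especially for singular graphs whose singularity is not attributable to twins, such as unbalanced bipartite graphs or the tree of Example \ref{exa:tree} -- is the step I expect to be the genuine difficulty, and it is precisely the point at which the case-by-case analyses in the examples had to replace a general argument.
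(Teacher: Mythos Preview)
The statement in question is Conjecture~\ref{conjecture}, which the paper explicitly leaves \emph{open}; there is no proof in the paper to compare your attempt against. The authors support it only through the partial results of Propositions~\ref{theo:generalization} and~\ref{theo:sufficient} and the worked Examples~\ref{exa:regular}--\ref{exa:tree}, and then write that ``further work needs to be carried out to establish whether this is true or not.''

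Your proposal is, correspondingly, not a complete proof either, and you are explicit about this. On the parts that can be carried out, you agree with the paper. Your explicit diagonal maps for the regular and biregular cases are the inverses of the isomorphism in Proposition~\ref{theo:generalization} (and of \cite[Theorem~3.2(i)]{PMP} in the regular case). Your reduction from a permuted-diagonal isomorphism to the conclusion that $G$ is regular or biregular is exactly Proposition~\ref{theo:sufficient}; note a small slip in your write-up: from $c_k=c_i^2\deg(i)$ for $k\in\mathcal{N}(i)$ it is $c$, not your $\alpha$, that is constant on each neighbourhood, and the equal-degree conclusion then comes from the reciprocal relation $c_i=c_k^2\deg(k)$, just as in the proof of Proposition~\ref{theo:sufficient}.

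The step you flag as the genuine difficulty --- forcing an arbitrary non-null homomorphism into permuted-diagonal form when $\det A=0$ --- is precisely the step the paper cannot do, and is the entire content of the conjecture beyond Theorem~\ref{theo:criterio}. Your twin-quotient strategy goes beyond anything in the paper. It is more promising than your parenthetical suggests: the tree of Example~\ref{exa:tree} \emph{is} handled by it, since identifying the pendant twins yields the path $P_4$, which is non-singular and neither regular nor biregular, so Theorem~\ref{theo:criterio} applies to the quotient. The real obstruction is that singular connected graphs with no twin vertices exist (e.g.\ the path $P_5$), so iterated twin-quotienting need not reach a non-singular graph; for such graphs your descent has nothing to bite on, and one is back to ad~hoc computations of the kind in Example~\ref{exa:tree}. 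That is the genuine open gap, in your proposal and in the paper alike.
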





\subsection{Some results for general graphs}

As stated in the previous Section, the existence of isomorphisms between $\mathcal{A}(G)$ and $\mathcal{A}_{RW}(G)$ has been stablished in \cite{PMP} for the particular case of regular and complete bipartite graphs. As we show next this result can be extended for biregular graphs.

\begin{prop}\label{theo:generalization}
Let $G=(V_1,V_2,E)$ be a biregular graph. Then $\A(G) \cong \A_{RW}(G)$.
\end{prop}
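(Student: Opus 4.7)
The plan is to exhibit an explicit diagonal isomorphism $f:\mathcal{A}_{RW}(G)\longrightarrow \mathcal{A}(G)$ of the form $f(e_i)=\alpha_i\,e_i$, where the scalar $\alpha_i$ depends only on which side of the bipartition the vertex $i$ belongs to. Concretely, I would look for constants $a,b\in\mathbb{R}\setminus\{0\}$ and set
$$f(e_i)=\begin{cases} a\,e_i, & i\in V_1,\\ b\,e_i, & i\in V_2. \end{cases}$$
Linearity and the fact that $a,b\neq 0$ will make $f$ bijective, and the orthogonality relation $f(e_i)\cdot f(e_j)=\alpha_i\alpha_j\,e_i\cdot e_j=0=f(e_i\cdot e_j)$ for $i\neq j$ is automatic from the fact that $\{e_i\}$ is a natural basis of both algebras. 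So the only nontrivial condition is $f(e_i^2)=f(e_i)\cdot f(e_i)$.

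Next I would unpack that condition using biregularity. If $i\in V_1$ then $\mathcal{N}(i)\subseteq V_2$ and $\deg(i)=d_1$, so
$$f(e_i^2)=\frac{1}{d_1}\sum_{k\in\mathcal{N}(i)}f(e_k)=\frac{b}{d_1}\sum_{k\in\mathcal{N}(i)}e_k,\qquad f(e_i)\cdot f(e_i)=a^2\sum_{k\in\mathcal{N}(i)}e_k,$$
forcing $a^2=b/d_1$. Symmetrically, for $i\in V_2$ one gets $b^2=a/d_2$. The key point is that because the graph is bipartite, each squaring relation only mixes one of the two scalars with the other, so the system collapses to these two scalar equations rather than a per-vertex constraint.

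Solving the system, I would substitute $b=a^2 d_1$ into $b^2=a/d_2$ to obtain $a^3=1/(d_1^2 d_2)$, and hence take
$$a=(d_1^2 d_2)^{-1/3},\qquad b=a^2 d_1=(d_1 d_2^2)^{-1/3}.$$
(A quick check $b^2 d_2=(d_1^2 d_2)^{-1/3}=a$ confirms the second equation is satisfied.) Both constants are real, positive and nonzero, so $f$ is a well-defined bijective linear map; by construction it respects the multiplication on the natural basis, so it extends to an algebra isomorphism between $\mathcal{A}_{RW}(G)$ and $\mathcal{A}(G)$.

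There is no real obstacle here; the only mild subtlety is recognizing that biregularity (as opposed to mere bipartiteness) is precisely what reduces the infinitely many scaling equations — one per vertex — to a $2\times 2$ system in $(a,b)$, which then admits a closed-form real solution because we are free to take cube roots in $\mathbb{R}$. Note in particular that the construction also covers infinite biregular graphs (such as the $2$-periodic tree and the hexagonal lattice in Fig.~\ref{fig:tree}), since the argument is vertex-by-vertex and does not rely on finiteness of $V$.
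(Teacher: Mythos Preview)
Your proof is correct and takes essentially the same approach as the paper: a diagonal map $e_i\mapsto \alpha_i e_i$ with $\alpha_i$ depending only on the side of the bipartition. The paper writes the map in the opposite direction, $f:\mathcal{A}(G)\to\mathcal{A}_{RW}(G)$, with constants $(d_1^2 d_2)^{1/3}$ and $(d_1 d_2^2)^{1/3}$ (the reciprocals of your $a$ and $b$), and simply asserts it is an isomorphism; your derivation of the system $a^2=b/d_1$, $b^2=a/d_2$ just makes explicit what the paper leaves to the reader.
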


\begin{proof}
Assume that $G=(V_1,V_2,E)$  is a $(d_1,d_2)$-biregular  graph and consider the linear map $f:\mathcal{A}(G)\longrightarrow \mathcal{A}_{RW}(G)$ defined by 
\begin{equation}\label{eq:iso}
f(e_i)=\left\{
\begin{array}{cl}
\left(d_1^2 d_2\right)^{1/3}\, e_i,& \text{ if }i\in V_1,\\[.2cm]
\left(d_1 d_2^2\right)^{1/3}\, e_i,& \text{ if }i\in V_2.
\end{array}\right.
\end{equation}
Thus defined $f$ is an isomorphism between $\A(G)$ and $\A_{RW}(G)$. 
\end{proof}

By \cite[Theorem 3.2]{PMP} and Proposition \ref{theo:generalization} we have that $\mathcal{A}_{RW}(G) \cong \mathcal{A}(G)$ provided $G$ is either a regular or a biregular graph. At this point, the reader could ask if the converse is true. The following result sheds some light on this question.

\begin{prop}\label{theo:sufficient}
Let $G=(V,E)$ be a graph. Assume that there exist an isomorphism $f:\mathcal{A}(G)\longrightarrow \mathcal{A}_{RW}(G)$ defined by \begin{equation}\label{eq:isofunc}
f(e_i) = \alpha_i e_{\pi(i)},\;\;\;\text{ for all }i\in V,
\end{equation}
where $\alpha_i \neq 0$ is a scalar, for $i\in V$, and $\pi$ is an element of the  symmetric group {\color{black}$S_{V}$}. Then $G$ is a biregular graph  or a regular graph. 
\end{prop}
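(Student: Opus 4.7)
The plan is to translate the algebra isomorphism identity $f(e_i\cdot e_j) = f(e_i)\cdot f(e_j)$ into a pointwise constraint on the scalars $\alpha_i$, and then to extract the degree condition from that constraint. Since $\pi\in S_V$ is a bijection and $e_{\pi(i)}\cdot e_{\pi(j)}=0$ for $i\neq j$, the off-diagonal identities are automatic, so the whole content lies in the identities for $e_i^2$. Expanding both sides gives
\begin{equation*}
\sum_{k\in V} a_{ik}\,\alpha_k\, e_{\pi(k)} \;=\; f(e_i^2) \;=\; f(e_i)^2 \;=\; \frac{\alpha_i^2}{\deg(\pi(i))}\sum_{\ell\in V} a_{\pi(i)\ell}\, e_\ell,
\end{equation*}
and comparing the coefficient of $e_{\pi(k)}$ on both sides yields, for every $(i,k)\in V\times V$,
\begin{equation*}
a_{ik}\,\alpha_k \;=\; \frac{\alpha_i^2}{\deg(\pi(i))}\, a_{\pi(i)\pi(k)}.
\end{equation*}

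Because every $\alpha_i$ is nonzero and every degree is positive, this relation forces $a_{ik}=0 \iff a_{\pi(i)\pi(k)}=0$; hence $\pi$ is a graph automorphism of $G$, and in particular $\deg(\pi(i))=\deg(i)$. Substituting this back in, the identity collapses, for every edge $\{i,k\}$ of $G$, to
\begin{equation*}
\alpha_k \;=\; \frac{\alpha_i^2}{\deg(i)}.
\end{equation*}
Applying this once at $i$ and once at $k$ then gives $\alpha_i = \alpha_k^2/\deg(k)$, and hence $\deg(k) = \alpha_i^3/\deg(i)^2$, an expression depending only on $i$. Thus all neighbors of a given vertex share a common degree, and iterating the same argument one step further shows that every vertex at graph-distance $2$ from $i$ has degree $\deg(i)$. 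In other words, the degree of a vertex is determined by the parity of its distance from any fixed reference vertex.

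The conclusion is then purely graph-theoretic and uses the standing assumption that $G$ is connected. If $G$ contains an odd cycle, then travelling around that cycle exhibits some pair of vertices connected by paths of opposite parities, which forces a single common degree on $V$ and makes $G$ regular. Otherwise $G$ is bipartite, and by connectedness the two sides $V_1,V_2$ of the bipartition inherit constant degrees $d_1$ and $d_2$ respectively, so $G$ is $(d_1,d_2)$-biregular. The only delicate point in the argument is the index matching in the coefficient comparison once $\pi$ is nontrivial; past that step everything reduces to a short propagation along edges, which works without modification in the infinite locally-finite setting as well.
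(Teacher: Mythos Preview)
Your proof is correct and follows essentially the same route as the paper: expand $f(e_i^2)=f(e_i)^2$, compare coefficients to see that $\pi$ preserves adjacency (hence degrees), deduce $\alpha_k=\alpha_i^2/\deg(i)$ along each edge, and conclude that all neighbors of a vertex share a common degree. The only cosmetic difference is the final graph-theoretic step: you invoke the odd-cycle/bipartite dichotomy directly, whereas the paper builds the parity classes $V_1,V_2$ explicitly and argues by iterated neighborhoods; the content is the same.
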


\begin{proof}
Assume that there map $f:\mathcal{A}(G)\longrightarrow \mathcal{A}_{RW}(G)$ is an isomorphism defined by $f(e_i) = \alpha_i e_{\pi(i)}$, where $\alpha_i \neq 0$, for $i\in V$, and $\pi \in {\color{black}S_{V}}$.   Since $f$ is linear  we have that 
\begin{equation}
 f(e_{i}^2)= f \left( \sum_{\ell \in \mathcal{N}(i) }e_{\ell} \right) =  \sum_{\ell \in \mathcal{N}(i) } f(e_{\ell})  =  \sum_{\ell \in \mathcal{N}(i) } \alpha_{\ell} e_{\pi(\ell)} 
\end{equation}
for $i\in V$. On the other hand, since  $f$ is an homomorphism then we have for any $i\in V$:
\begin{equation}
f(e_{i}^2)=f(e_i)\cdot f(e_i) = \alpha_ i^2 e_{\pi(i)}^2= \alpha_ i^2 \sum_{\ell \in \mathcal{N}(\pi(i))} \frac{1}{deg(\pi(i))} \, e_{\ell} = \frac{\alpha_ i^2}{\deg(\pi(i))} \sum_{\ell \in \mathcal{N}(\pi(i))} \,e_{\ell}.
\end{equation}
Then 
\begin{equation} 
\sum_{\ell \in \mathcal{N}(i) } \alpha_{\ell} e_{\pi(\ell)} = \frac{\alpha_ i^2}{\deg(\pi(i))} \sum_{\ell \in \mathcal{N}(\pi(i))} \,e_{\ell}, \text{  for all } i \in V.
\end{equation}
Therefore  $  \mathcal{N}(i)=  \mathcal{N}(\pi(i))$ i. e. $ \deg(i)=\deg(\pi(i))$ for all $i\in V$. It follows
\begin{equation} \label{eq:prova2a}
\alpha_\ell = \frac{\alpha_{i}^2}{\deg(i)}, \hspace{0.3cm} \text { for all }\, \ell \in \mathcal{N}(i)
\end{equation}
This implies that,  for  
\begin{equation} \label{eq:prova2b}
\ell_1, \ell_2 \in \mathcal{N}(i), \,\,\,\,\alpha_{\ell_1}=\alpha_{\ell_2}.
\end{equation}

Given  $i\in V$,  if  $\ell \in \mathcal{N}(i)$ then  $i \in \mathcal{N}(\ell)$,  hence  by \eqref{eq:prova2a}
\begin{equation} \label{eq:prova2c}
\alpha_i = \frac{\alpha_{\ell}^2}{\deg(\ell)} \hspace{0.3cm} \text{ for all  }\ell \in \mathcal{N}(i)
\end{equation}
So by \eqref{eq:prova2b} and \eqref{eq:prova2c} for $\ell_1, \ell_2 \in \mathcal{N}(i)$
\begin{equation}
\frac{\alpha_{\ell_{1}}^2}{\deg(\ell_{1})} = \alpha_i = \frac{\alpha_{\ell_{2}}^2}{\deg(\ell_{2})} . \end{equation}
As a consequence, we obtain the following condition on the degrees in the graph:

\begin{equation}\label{eq:prova2}
\text{ for any }i\in V, \text{ if  }\ell_1, \ell_2 \in \mathcal{N}(i) \text{ then }\deg(\ell_1)=\deg(\ell_2).
\end{equation}
Now let us fix a vertex, say $1$, and note that by \eqref{eq:prova2} we have $\deg(\ell)=\deg(1)$ for any $\ell \in V$ such that there is a path of even size from $1$ to $\ell$ (see Fig. \ref{fig:vertices}).

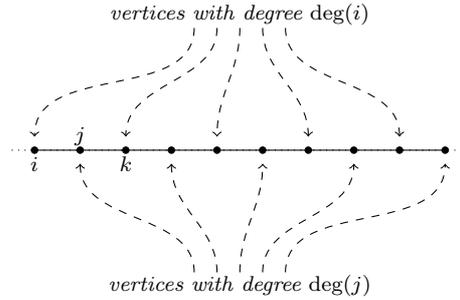
\begin{figure}[!h]

\begin{center}

\begin{tikzpicture}[scale=0.6]

\draw (-4,0) -- (5,0);
\draw [dotted] (-4.5,0) -- (5.5,0);

\filldraw [black] (-4,0) circle (2pt);
\draw (-4,-0.3) node[font=\footnotesize] {$i$};
\filldraw [black] (-3,0) circle (2pt);
\draw (-3,0.3) node[font=\footnotesize] {$j$};
\filldraw [black] (-2,0) circle (2pt);
\draw (-2,-0.3) node[font=\footnotesize] {$k$};
\filldraw [black] (-1,0) circle (2pt);
\filldraw [black] (0,0) circle (2pt);
\filldraw [black] (5,0) circle (2pt);
\filldraw [black] (4,0) circle (2pt);
\filldraw [black] (3,0) circle (2pt);
\filldraw [black] (2,0) circle (2pt);
\filldraw [black] (1,0) circle (2pt);

\draw [dashed,->] (-0.5,2.7) to [out=270,in=90] (-4,0.3);
\draw [dashed,->] (0,2.7) to [out=270,in=90] (-2,0.3);
\draw [dashed,->] (0.5,2.7) to [out=270,in=90] (0,0.3);
\draw [dashed,->] (1,2.7) to [out=270,in=90] (2,0.3);
\draw [dashed,->] (1.5,2.7) to [out=270,in=90] (4,0.3);

\draw (0.5,3) node[font=\footnotesize] {{\it vertices with degree} $\deg(i)$};
\draw (0.5,-3) node[font=\footnotesize] {{\it vertices with degree} $\deg(j)$};

\draw [dashed,->] (-0.5,-2.7) to [out=90,in=270] (-3,-0.3);
\draw [dashed,->] (0,-2.7) to [out=90,in=270] (-1,-0.3);
\draw [dashed,->] (0.5,-2.7) to [out=90,in=270] (1,-0.3);
\draw [dashed,->] (1,-2.7) to [out=90,in=270] (3,-0.3);
\draw [dashed,->] (1.5,-2.7) to [out=90,in=270] (5,-0.3);

\end{tikzpicture}

\caption{Ilustration of the application of condition \eqref{eq:prova2} to a given path of the graph. Since $i\in \N(j)$ and $j\in \N(k)$ it should be $\deg(k) = \deg(i)$. This argument may be extended to the whole graph showing that at the end there are at most two different degrees in the vertices of the graph.}\label{fig:vertices}
\end{center}
\end{figure}

Analogously, we have $\deg(\ell_1)=\deg(\ell_2)$ for any $\ell_1,\ell_2 \in V$ such that there is a path of odd size from $1$ to $\ell_k$, $k\in\{1,2\}$. Now let us define $V_1:=\{j \in V: d(1,j) \text{ is } even \}$ and $V_2:=\{j \in V: d(1,j) \text{ is } odd\}$. Notice that by our previous comments our definition of $V_1$ and $V_2$ is enough to guarantee that $\deg(i)=\deg(j)$ for $i,j\in V_k$, and $k\in\{1,2\}$. If every edge  on $G$ connects a vertex in $V_1$ to one in $V_2$, then $G$ is a  biregular graph.  In the opposite case, if there exist $i,j \in V_1$ such that $i \in \N(j)$, we claim that $G$ is a $\deg(1)$-regular graph.  To see this, we fix these vertices $i,j$, let $U_1:= \N(i)$, and for $m\in \mathbb{N}, m>1,$ let $U_m := \N(U_{m-1})$.  Since $G$ is a connected graph, if for any $n \in \mathbb{N}$ is true that  
$$ \bigcup_{i=1}^{n}U_i \subseteq V_1,$$
then $V_1 =V$. Otherwise, there exist $q \in \mathbb{N}$ such that  $\bigcup_{i=1}^{q}U_i \nsubseteq V_1$. Let $\ell \in \left(  \bigcup_{i=1}^{q} U_i  \right)   \cap V_2$. Then there is a $t \in \{1,\ldots,q\}$ such that $\ell \in U_t \cap V_2.$
Note that as $\ell \in U_t $ there is a path $\gamma=(i_{0},i_1, \ldots, i_{t-1},i_{t})$ of size $t$ connecting $i$ to $\ell$; i.e. $i_0=i$ and $i_t=\ell$. If  $t$ is even then $\deg(\ell)= \deg(i)$ and then $G$ is $\deg(1)$-regular. If $t$ is odd, we consider the path $\gamma_1=(j,i,i_1, \ldots, i_{t-1},i_{t})$ connecting $j$ to $\ell$, which has size even so $\deg(j)=\deg(\ell)$, but $\deg(j)=\deg(1)$, and therefore $G$ is a $\deg(1)$-regular graph. The same argument holds by assuming the existence of a pair of vertices $i,j \in V_2$ such that $i \in \N(j)$. 

\end{proof}

For the rest of the paper, we adopt the notation $f_{\pi}$ for a map between evolution algebras, with the same natural basis, defined by \eqref{eq:isofunc}. Even if $\mathcal{A}_{RW}(G) \cong \mathcal{A}(G)$ it is important to note that not every map defined as in \eqref{eq:isofunc} is an isomorphism, as we illustrate in the following example. 

\begin{exa}\label{exa:cycle}
Let $C_5$ the cycle graph or circular graph with $5$ vertices (see Fig. \ref{FIG:cycle}). 
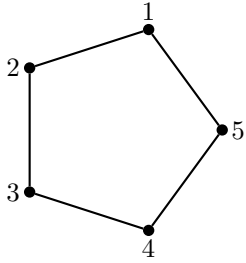
\begin{figure}[!h]
\begin{center}

\begin{tikzpicture}[scale=.7]
\GraphInit[vstyle=Simple]
    \SetGraphUnit{1}
    \tikzset{VertexStyle/.style = {shape = circle,fill = black,minimum size = 2.5pt,inner sep=1.5pt}}
  \begin{scope}[xshift=10cm]
\grCycle[prefix=a,RA=2]{5}%
\node[above] at (a1) {$1$};
\node[left] at (a2) {$2$};
\node[left] at (a3) {$3$};
\node[below] at (a4) {$4$};
\node at (2.3,0) {$5$};
\end{scope}
\end{tikzpicture}
\caption{Cycle graph $C_5$}
\end{center}
\label{FIG:cycle}
\end{figure}

Consider the evolution algebras induced by $C_5$, and by the random walk on $C_5$, respectively. That is, consider the evolution algebras whose natural basis is $\{e_1,e_2,e_3,e_4,e_5\}$ and relations are:

\smallskip
$$
\begin{array}{cc}
\mathcal{A}(C_5): \left\{
\begin{array}{l}
 e_1^2= e_2 + e_5,\\[.2cm]
 e_i^2=e_{i-1}+e_{i+1},   i\in \{2,3,4\},\\[.2cm]
 e_5^2=e_1 + e_{4}, \\[.2cm]
 e_i \cdot e_j =0,  i\neq j.
\end{array}\right.
&
\mathcal{A}_{RW}(C_5): \left\{
\begin{array}{l}
 e_1^2= \frac{1}{2}\,e_2 +\frac{1}{2}\, e_5,\\[.2cm]
 e_i^2=\frac{1}{2}\,e_{i-1}+\frac{1}{2}\,e_{i+1},   i\in \{2,3,4\},\\[.2cm]
 e_5^2=\frac{1}{2}\,e_1 +\frac{1}{2}\, e_{4}, \\[.2cm]
 e_i \cdot e_j =0,  i\neq j.
\end{array}\right.
\end{array}
$$

\vspace{.5cm}
\noindent
Note that $C_5$ is a $2$-regular graph, then by \cite[Theorem 3.2(i)]{PMP} $\mathcal{A}_{RW}(C_5) \cong \mathcal{A}(C_5)$ as evolution algebras. However, we shall see that not all map $f_{\pi}$,  with  $\pi \in S_5$, is an isomorphism. Indeed, let $\pi$ is given by


\begin{equation}\label{eq:piexa}
\pi:= \left(
\begin{array}{ccccc}
1 & 2 & 3 & 4 & 5\\[.2cm]
3& 2 & 1& 4 & 5
\end{array}\right).
\end{equation}
We shall verify that $f_{\pi}:\mathcal{A}_{RW}(C_5)\longrightarrow \mathcal{A}(C_5)$ defined by \eqref{eq:isofunc} is not an isomorphism. In order to do it, it is enough to note that

$$f_{\pi}(e_1 ^2) = \frac{1}{2}f_{\pi}\left(e_2 + e_5\right) = \frac{1}{2} \left(\alpha_2 e_{2} + \alpha_5 e_5\right),$$
while

$$f_{\pi}(e_1)\cdot f_{\pi}(e_1)= \alpha_1^2 e_3^2 = \alpha_1^2 \left(e_2 + e_4\right).$$

\vspace{.5cm}
\noindent
Therefore $f_{\pi}(e_1 ^2) \neq f_{\pi}(e_1)\cdot f_{\pi}(e_1)$.

\end{exa}


\begin{prop}\label{prop:pi}Let $G$ be a graph and let $A=(a_{ij})$ be its adjacency matrix. Assume $f_{\pi}:\mathcal{A}_{RW}(G)\longrightarrow \mathcal{A}(G)$ is an isomorphism defined as in \eqref{eq:isofunc}, i.e. 
\begin{equation*}
f_{\pi}(e_i) = \alpha_i e_{\pi(i)},\;\;\;\text{ for all }i\in V,
\end{equation*}
where $\alpha_i \neq 0$, $i\in V$, are scalars and $\pi \in {\color{black}S_{V}}$. Then $\pi$ satisfies 
\begin{equation}\label{eq:condpi}
a_{i\pi^{-1}(k)}\,\alpha _{\pi^{-1}(k)}=\deg(i)\,\alpha_i^2\,a_{\pi(i)k} , \text{  for } i,k \in V.
\end{equation}

\end{prop}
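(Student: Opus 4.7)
The plan is straightforward: I will exploit the defining property of a homomorphism, namely $f_\pi(e_i \cdot e_i) = f_\pi(e_i) \cdot f_\pi(e_i)$, evaluate both sides explicitly using the multiplication tables of $\mathcal{A}_{RW}(G)$ and $\mathcal{A}(G)$, and then equate coefficients in the natural basis.

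First I would compute the left-hand side. Since $f_\pi$ is linear and in $\mathcal{A}_{RW}(G)$ we have $e_i \cdot e_i = \sum_{k\in V} \frac{a_{ik}}{\deg(i)} e_k$, a direct application of the definition $f_\pi(e_k) = \alpha_k e_{\pi(k)}$ yields
\[
f_\pi(e_i^2) \;=\; \sum_{k\in V} \frac{a_{ik}}{\deg(i)}\,\alpha_k\, e_{\pi(k)}.
\]
To express this in the basis $\{e_k\}_{k\in V}$ with the same indexing as the right-hand side, I would reindex by replacing $k$ with $\pi^{-1}(k)$, using that $\pi \in S_V$ is a bijection:
\[
f_\pi(e_i^2) \;=\; \sum_{k\in V} \frac{a_{i\pi^{-1}(k)}}{\deg(i)}\,\alpha_{\pi^{-1}(k)}\, e_{k}.
\]

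Next I would compute the right-hand side directly in $\mathcal{A}(G)$, where $e_{\pi(i)}^2 = \sum_{k\in V} a_{\pi(i)k}\, e_k$:
\[
f_\pi(e_i)\cdot f_\pi(e_i) \;=\; \alpha_i^2\, e_{\pi(i)}^2 \;=\; \alpha_i^2 \sum_{k\in V} a_{\pi(i)k}\, e_k.
\]

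Finally, since $\{e_k\}_{k\in V}$ is a basis and hence linearly independent, equating the coefficient of $e_k$ on both sides yields
\[
\frac{a_{i\pi^{-1}(k)}\,\alpha_{\pi^{-1}(k)}}{\deg(i)} \;=\; \alpha_i^2\, a_{\pi(i)k},
\]
and multiplying through by $\deg(i)$ gives exactly \eqref{eq:condpi}. There is no real obstacle here; the only point requiring care is the reindexing step to align the two sums in a common basis before comparing coefficients. Note that the argument only uses that $f_\pi$ preserves squares, so the hypothesis that $f_\pi$ is an isomorphism is not fully exploited; homomorphism would suffice for this particular necessary condition on $\pi$.
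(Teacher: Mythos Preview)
Your proof is correct and follows essentially the same approach as the paper: compute $f_\pi(e_i^2)$ two ways using the multiplication tables of $\mathcal{A}_{RW}(G)$ and $\mathcal{A}(G)$, then compare coefficients after reindexing via $k\mapsto \pi^{-1}(k)$. Your presentation is in fact slightly more explicit about the reindexing step and correctly notes that only the homomorphism property is used.
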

\begin{proof} Since $f_{\pi}$ is an homomorphism we have that 
$$ f_{\pi}(e_i^2)=f_{\pi}(e_i)\cdot f_{\pi}(e_i)=\alpha_{i}^2 e^{2}_{\pi(i)}= \alpha_{i}^{2} \sum_{k=1}^{|V|} a_{\pi(i)k} e_k , \text { for }  i\in V.$$
On the other hand 
$$f_{\pi}(e_{i}^{2})=f_{\pi}\left(\sum_{k=1}^{|V|}\left(\frac{a_{ik}}{\deg(i)}\right)e_k\right)= \sum_{k=1}^{|V|} \left(\frac{a_{ik}}{\deg(i)}\right) \alpha_{k}e_{\pi(k)}.$$
Then $$a_{i\pi^{-1}(k)}\,\alpha _{\pi^{-1}(k)}=\deg(i)\,\alpha_i^2\,a_{\pi(i)k},$$
for any $i,k \in V$, where $\pi^{-1} \in {\color{black}S_{V}}$ denotes the inverse of $\pi$, i.e. $\pi^{-1}(j)=i$ if, and only if, $\pi(i)=j$, for any $i,j\in V$. We notice that either in the case $|V|=\infty$ the previous sums are summations of a finite number of terms. This is because we are considering locally finite graphs.

\end{proof}

\begin{exa}

Let $C_5$ be the cycle graph considered in Example \ref{exa:cycle}, and let $f_{\pi}:\mathcal{A}_{RW}(C_5)\longrightarrow \mathcal{A}(C_5)$, where $\pi$ is given by \eqref{eq:piexa}. Taking $i=1$ and $k=4$ we have on one hand $a_{\pi(1) 4}=a_{34}=1$, while, on the other hand, $a_{1 \pi^{-1}(4)}=a_{14}=0$. This is enough to see that there exist no sequence of non-zero scalars $(\alpha_i)_{i\in V}$ such that \eqref{eq:condpi} holds. Therefore, by Proposition  \ref{prop:pi},  $f_{\pi}$ it is not an isomorphism. On the other hand, a straightforward calculation shows that the element of $S_5$ given by
    $$\sigma:= \left(
\begin{array}{ccccc}
1 & 2 & 3 & 4 & 5\\[.2cm]
2& 3 & 4& 5 & 1
\end{array}\right),$$
satisfies \eqref{eq:condpi}, provided $\alpha_i =1/2$ for any $i\in V$. {\color{black}Moreover it is possible to check that} $f_{\sigma}:\mathcal{A}_{RW}(C_5)\longrightarrow \mathcal{A}(C_5)$ defined for $i\in V$ by $f_{\sigma}(e_i) =(1/2) e_{\sigma(i)}$ is an isomorphism. 
\end{exa}

\subsection{Isomorphisms for the case of {\color{black}finite} non-singular graphs and proof of Theorem \ref{theo:criterio}}

\begin{prop}\label{theo:principal}
Let $G$ be a non-singular graph with $n$ vertices and let $A=(a_{ij})$ be its adjacency matrix. If $f:\mathcal{A}_{RW}(G)\longrightarrow \mathcal{A}(G)$ is an homomorphism, then either $f$ is the null map or $f$ is an isomorphism defined by
\begin{equation*} 
f(e_i) = \alpha_i e_{\pi(i)},\;\;\;\text{ for all }i\in V,
\end{equation*}
where $\alpha_i \neq 0$, $i\in V$, are scalars and $\pi$ is an element of the  symmetric group $S_n$.
\end{prop}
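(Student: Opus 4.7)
The plan is to expand a putative homomorphism in the natural basis and extract linear-algebraic consequences from the defining relations. Write $f(e_i) = \sum_{k \in V} t_{ik} e_k$, and let $T = (t_{ik})$ denote the $n \times n$ matrix of coefficients. The condition $f(e_i \cdot e_j) = f(e_i)\cdot f(e_j)$ for $i \neq j$, combined with $e_i \cdot e_j = 0$ in $\mathcal{A}_{RW}(G)$ and $e_k^2 = \sum_m a_{km} e_m$ in $\mathcal{A}(G)$, yields
\begin{equation*}
0 \,=\, f(e_i) \cdot f(e_j) \,=\, \sum_{m \in V} \left(\sum_{k \in V} t_{ik}\, t_{jk}\, a_{km}\right) e_m.
\end{equation*}
Thus the row vector $\bigl(t_{ik}\, t_{jk}\bigr)_{k \in V}$ lies in the left kernel of $A$; since $A$ is non-singular this kernel is trivial, so $t_{ik}\, t_{jk} = 0$ for all $k \in V$ and all $i \neq j$. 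Equivalently, the supports $R_i := \{k \in V : t_{ik} \neq 0\}$ of the rows of $T$ are pairwise disjoint.

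Next I would use the diagonal relations to propagate the vanishing of $f$ along the graph. If $f(e_{i_0}) = 0$ for some $i_0 \in V$, then
\begin{equation*}
0 \,=\, f(e_{i_0})\cdot f(e_{i_0}) \,=\, f(e_{i_0}^2) \,=\, \frac{1}{\deg(i_0)} \sum_{\ell \in \mathcal{N}(i_0)} f(e_\ell).
\end{equation*}
By the disjoint-support property just established, a linear combination of the vectors $f(e_\ell)$ with pairwise disjoint supports vanishes only when every summand does, so $f(e_\ell) = 0$ for every neighbor $\ell$ of $i_0$. A straightforward induction on graph distance from $i_0$, using connectedness of $G$, then gives $f \equiv 0$. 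Hence if $f$ is not the null map, $R_i \neq \emptyset$ for every $i \in V$.

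Since the $n$ non-empty sets $R_i$ are pairwise disjoint subsets of the $n$-element set $V$, each $|R_i| = 1$, and the assignment $\pi \colon i \mapsto \pi(i)$ determined by $R_i = \{\pi(i)\}$ is a bijection, i.e.\ an element of $S_n$. Writing $\alpha_i := t_{i\,\pi(i)} \neq 0$, we obtain $f(e_i) = \alpha_i\, e_{\pi(i)}$ as required; the matrix $T$ is then a generalized permutation matrix, hence invertible, so $f$ is an isomorphism. The main obstacle, in my view, is the very first step: translating the homomorphism condition into a linear system whose kernel can be controlled by the non-singularity hypothesis on $A$. Once the pairwise-disjoint-support property is in hand, the connectedness propagation and the counting argument are essentially immediate.
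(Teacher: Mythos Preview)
Your argument is correct and follows essentially the same route as the paper: use non-singularity of $A$ to force pairwise disjoint row supports, propagate vanishing along the connected graph, and then count. Your pigeonhole formulation ($n$ non-empty pairwise disjoint subsets of an $n$-set each have size one) is a slightly cleaner packaging of what the paper does via a two-case analysis, but the substance is identical.
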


\begin{proof}
Let $f:\mathcal{A}_{RW}(G)\longrightarrow \mathcal{A}(G)$ an homomorphism such that 
\begin{equation}
\nonumber f(e_i)=\sum_{k=1}^{n} t_{ik}e_k, \,\,\, \text{for any } i\in V,
\end{equation}
where the $t_{ik}$'s are scalars. Then $f(e_i)\cdot f(e_j)=0$ for any $i\neq j$, which implies
$$
0=\sum_{k\in V}t_{ik} t_{jk} e_k^2 =\sum_{k\in V} t_{ik} t_{jk} \left(\sum_{r\in V} a_{kr} e_r\right)=\sum_{r\in V} \left(\sum_{k\in V} t_{ik} t_{jk} a_{kr}\right) e_r.
$$
This in turns implies, for any $r\in V$,
$$\sum_{k\in V} t_{ik} t_{jk} a_{kr} =0.$$
In other words we have, for $i\neq j$, $A^T
\begin{bmatrix}
t_{i1}t_{j1} & t_{i2}t_{j2} & \cdots & t_{in}t_{jn}
\end{bmatrix}^{T} = \begin{bmatrix}
0 & 0 & \cdots & 0
\end{bmatrix}^{T}$, where $B^T$ denotes the transpose of the matrix $B$. As the adjacency matrix $A$ is non-singular then
\begin{equation}\label{eq:tij}
\nonumber t_{ik}t_{jk}=0, \text{ for any }i,j,k\in V \text{ with }i\neq j.
\end{equation}
Thus for any fixed $k\in V$ we have $t_{ik} =0$ for all $ i\in V$, or there exists at most one $i:=i(k)\in V$ such that $t_{ik}  \not = 0$ and  $t_{jk} =0$ for all $j  \not = i$. Then 
\begin{equation}\label{eq:supp}
(\supp f(e_i) ) \cap (\supp f(e_j)) =\emptyset, \text{ for }i\neq j, \text{ and }\displaystyle \cup_{i\in V} (\supp f(e_i) ) = V,
\end{equation}
where $\supp f(e_i)=\{j\in V: t_{ij}\neq 0\}$. In what follows we consider two cases.\\

\noindent
{\bf Case 1.} For any $k\in V$ there exists $i\in V$ such that $t_{ik}  \not = 0$ and  $t_{jk} =0$ for all $j  \not = i$. In this case the sequence $(t_{ij})_{i,j\in V}$ contains only $n$ scalars different from zero. Assume that there exists $i\in V$ such that $t_{i j_1}\neq 0$ and $t_{i j_2}\neq 0$ for some $j_1, j_2 \in V$. This implies the existence of $m\in V$ such that $f(e_m) =0$, which in turns implies $f(e_m)\cdot f(e_m)=0$. On the other hand, as $f(e_m)\cdot f(e_m) = f(e_m^2)$, we have 
$$0=f(e_m^2) =f\left(\sum_{\ell \in V}\left(\frac{a_{m\ell}}{k_m}\right) e_{\ell}\right)=\sum_{\ell \in V} \left(\frac{a_{m\ell}}{k_m}\right) f(e_{\ell}).$$
We can use \eqref{eq:supp} to conclude $f(e_{\ell})=0$ for any $\ell$ such that $a_{m\ell}=1$. In other words, for any $\ell \in \mathcal{N}(m)$ it holds that $f(e_\ell)=0$. This procedure may be repeated, now for any $\ell \in \mathcal{N}(m)$, i.e. we can prove for any $v\in\mathcal{N}({\ell})$ that $f(e_v)=0$. As we are dealing with a connected graph, this procedure may be repeated until to cover all the vertices of $G$, and therefore we can conclude that $f(e_{i})=0$ for any $i\in V$, which is a contradiction. Therefore, for any $i\in V$ there exists only one $j:=j(i)$ such that $t_{ij}\neq 0$. Hence $f$ it must to be defined as $$f(e_i) = \alpha_i e_{\pi(i)},\;\;\;\text{ for all }i\in V,$$ where the $\alpha_i$'s are scalars, and $\pi$ is an element of the  symmetric group $S_n$.\\

\noindent
{\bf Case 2.} Assume that there exist $k\in V$ such that $t_{ik} =0$ for all $ i\in V$. Then the sequence $(t_{ij})_{i,j\in V}$ contains at most $n-1$ scalars different from zero, which implies the existence of $\ell \in V$ such that $f(e_{\ell})=0$. By applying similar arguments as the ones of Case 1 we conclude that $f(e_i)=0$ for any $i\in V$ and therefore $t_{ij}=0$ for any $i,j \in V$. Thus $f$ is the null map.

\end{proof}


\begin{remark}
In Proposition \ref{theo:principal} we assume that the adjacency matrix $A$ is non-singular. We point out that this hypothesis is equivalent to the transition matrix, say $A_{RW}$, of the symmetric random walk on $G$ be non-singular. In fact, if we denote by $F_i$ the $i$-th row of $A$ then we can write 
$A^T=
\begin{bmatrix}
F_1 & F_2 & \cdots & F_n
\end{bmatrix}
$. Then $A_{RW}^T=
\begin{bmatrix}
(1/\deg(1)) F_1 & (1/\deg(2)) F_2 & \cdots &(1/\deg(n)) F_n \end{bmatrix}
$, and therefore $\det A_{RW} = \left(\deg(1)\times \deg(2) \times \cdots \times \deg(n)\right)^{-1} \, \det A$.
\end{remark}

\subsubsection{Proof of Theorem \ref{theo:criterio}}

Together, \cite[Theorem 3.2(i)]{PMP}, and Propositions \ref{theo:generalization}, \ref{theo:sufficient}, \ref{theo:principal} gain in interest if we realize that they provide a necessary and sufficient condition for the existence of isomorphisms in the case of non-singular graphs.
Indeed, assume that $G$ is a non-singular graph and notice that $\A_{RW}(G)\cong \A(G)$ implies, by Proposition \ref{theo:principal}, that the isomorphisms between $\A_{RW}(G)$ and $\A(G)$ are given by \eqref{eq:isofunc}. Then by Proposition \ref{theo:sufficient} we conclude that $G$ is a regular or a biregular graph. For the reciprocal, it is enough to apply Proposition \ref{theo:generalization} and \cite[Theorem 3.2(i)]{PMP}.

{\color{black}
\begin{remark}
In Conjecture \ref{conjecture} we claim that Theorem \ref{theo:criterio} should be true for the case of finite singular graphs. Indeed, we believe that the conjecture should be true for infinite graphs too. To see that we notice the Propositions \ref{theo:generalization} and \ref{theo:sufficient} hold for infinite graphs. Therefore a generalization in this direction should be focus on an extension of Proposition \ref{theo:principal} to deal with infinite non-singular adjacency matrices.   
\end{remark}
}
\section{Connection with the automorphisms of $\mathcal{A}(G)$}

The purpose of this sections is twofold. First we show that the problem of looking for the isomorphisms between $\mathcal{A}_{RW}(G)$ and $\mathcal{A}(G)$ is equivalent to the question of looking for the automorphisms of $\mathcal{A}(G)$, provided $G$ is a regular graph. Second, we use the previous comparison to revisit a result obtained by \cite{camacho/gomez/omirov/turdibaev/2013}, which exhibit the automorphism group of an evolution algebra. Then we give a better presentation of such result.

As usual we use  $\aut \mathcal{A}(G)$ to denote the  automorphism group of $\mathcal{A}(G)$.

\begin{prop}\label{prop:auto}
Let $G$ be a $d$-regular graph. Then any isomorphism $f:\mathcal{A}_{RW}(G)\longrightarrow \mathcal{A}(G)$ induces a $g\in \aut \mathcal{A}(G)$. Analogously, any $g\in \aut \mathcal{A}(G)$ induces an isomorphism $f:\mathcal{A}_{RW}(G)\longrightarrow \mathcal{A}(G)$.
\end{prop}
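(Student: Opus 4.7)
The plan is to exhibit an explicit canonical isomorphism between $\mathcal{A}_{RW}(G)$ and $\mathcal{A}(G)$ that uses the $d$-regularity, and then translate between isomorphisms $\mathcal{A}_{RW}(G)\to\mathcal{A}(G)$ and automorphisms of $\mathcal{A}(G)$ by pre- or post-composing with this canonical map. Concretely, I would define the linear map $h:\mathcal{A}_{RW}(G)\to \mathcal{A}(G)$ on the natural basis by $h(e_i)=(1/d)\,e_i$ for every $i\in V$. Since $G$ is $d$-regular, the squares in the two algebras are $e_i^2=\sum_{k\in V} a_{ik}\,e_k$ in $\mathcal{A}(G)$ and $e_i^2=\sum_{k\in V}(a_{ik}/d)\,e_k$ in $\mathcal{A}_{RW}(G)$, so a direct computation shows $h(e_i)\cdot h(e_i)=(1/d)^2\sum_k a_{ik}e_k=(1/d)\sum_k (a_{ik}/d)\cdot d \cdot e_k \cdot (1/d)$, i.e. both sides agree with $h(e_i^2)=(1/d)\sum_k (a_{ik}/d)\,e_k$ after matching the factor $c^2=c/d$ with $c=1/d$; the cross products vanish trivially on both sides. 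Thus $h$ is an algebra homomorphism, and being a scalar multiple of the identity on a basis, it is invertible with $h^{-1}(e_i)=d\,e_i$. This essentially matches the isomorphism exhibited in Example 2.3.

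Once $h$ is in hand, the two implications are immediate from the fact that compositions of algebra isomorphisms are again algebra isomorphisms. First, given any isomorphism $f:\mathcal{A}_{RW}(G)\to \mathcal{A}(G)$, I set $g:=f\circ h^{-1}:\mathcal{A}(G)\to \mathcal{A}(G)$, which is an algebra isomorphism from $\mathcal{A}(G)$ to itself, i.e. $g\in \aut \mathcal{A}(G)$. Conversely, given $g\in \aut\mathcal{A}(G)$, I set $f:=g\circ h:\mathcal{A}_{RW}(G)\to\mathcal{A}(G)$, which is an algebra isomorphism between the two evolution algebras.

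The only step that requires any verification is the claim that $h$ is a homomorphism, and that is a one-line computation based solely on the observation that all degrees equal the same constant $d$. So there is no serious obstacle here; the content of the proposition is really the existence of the canonical scaling map $h$, which is available precisely because the $d$-regularity lets one absorb the factor $1/d$ appearing in every structure constant of $\mathcal{A}_{RW}(G)$ by a single global scalar $c$ satisfying $c^2=c/d$. The whole argument should therefore be short and clean.
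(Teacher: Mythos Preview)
Your proposal is correct and essentially coincides with the paper's proof: the paper defines $g(e_i)=d\,f(e_i)$ directly and verifies it is an automorphism (and conversely $f(e_i)=d^{-1}g(e_i)$), which is exactly your $g=f\circ h^{-1}$ and $f=g\circ h$ once one notes $h^{-1}(e_i)=d\,e_i$. Your packaging via the canonical scaling map $h$ is a touch more conceptual, but the underlying computation and the resulting formulas are identical.
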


\begin{proof}
Assume that $f:\mathcal{A}_{RW}(G)\longrightarrow \mathcal{A}(G)$ is an isomorphism and consider $g:\mathcal{A}(G)\longrightarrow \mathcal{A}(G)$ such that $g(e_i)=d\, f(e_i)$ for any $i\in V$. If $i\neq j$ then $g(e_i)\cdot g(e_j) = 0$. On the other hand, for $i\in V$
$$g(e_i ^2)=g\left(\sum_{j\in V}a_{ij} e_j \right) = \sum_{j\in V} a_{ij} d f(e_j),$$
while
$$g(e_i)\cdot g(e_i) = d^2\,f(e_i) \cdot f(e_i) = d^2\, f(e_i^2) = d^2\, f\left(\sum_{j\in V} \left(\frac{a_{ij}}{d} \right)e_j\right) = \sum_{j\in V} a_{ij} d f(e_j).$$
Thus $g$ is an automorphism of $ \mathcal{A}(G)$. The other assertion may be proved in analogous way by considering $f(e_i)=d^{-1}\, g(e_i)$ for any $i\in V$.

\end{proof}


The correspondence described in the previous proposition allow us to state the following result.

\begin{prop}\label{prop:autograph}
Let $G$ be a non-singular regular graph with $n$ vertices, and let $\mathcal{A}(G)$ be its associated evolution algebra. Then $\aut \mathcal{A}(G) \subseteq \{g_{\pi}: \pi \in S_{n}\}$.
 \end{prop}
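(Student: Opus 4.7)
The plan is to chain together the two propositions immediately preceding the statement, namely Proposition \ref{prop:auto} and Proposition \ref{theo:principal}. The former gives a correspondence between automorphisms of $\mathcal{A}(G)$ and isomorphisms $\mathcal{A}_{RW}(G)\longrightarrow \mathcal{A}(G)$ (valid because $G$ is $d$-regular), and the latter characterizes the shape of homomorphisms $\mathcal{A}_{RW}(G)\longrightarrow \mathcal{A}(G)$ when the adjacency matrix is non-singular. Putting them in series should force every automorphism of $\mathcal{A}(G)$ to have the prescribed form $g_\pi$.

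Concretely, I would take an arbitrary $g\in\aut\mathcal{A}(G)$ and, invoking Proposition \ref{prop:auto}, define $f:\mathcal{A}_{RW}(G)\longrightarrow\mathcal{A}(G)$ by $f(e_i):=d^{-1}g(e_i)$ for every $i\in V$; by that proposition $f$ is an isomorphism of evolution algebras. Since $G$ is non-singular with $n$ vertices, Proposition \ref{theo:principal} applies to $f$: either $f$ is the null map, or there exist non-zero scalars $\alpha_1,\ldots,\alpha_n$ and a permutation $\pi\in S_n$ with
\[
f(e_i)=\alpha_i e_{\pi(i)},\qquad\text{for all }i\in V.
\]
The null case is ruled out because $g$, being an automorphism, is a bijection, hence so is $f=d^{-1}g$. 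Therefore $g(e_i)=d\,f(e_i)=(d\alpha_i)\,e_{\pi(i)}$ for every $i\in V$, and since $d\alpha_i\neq 0$ this exhibits $g$ as an element of $\{g_\pi:\pi\in S_n\}$.

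There is no genuine obstacle in this argument, as everything is already packaged in the earlier propositions; the only subtle point is to make sure the rescaling $d^{-1}g$ preserves bijectivity (which is immediate because $d\neq 0$), thereby excluding the null alternative in Proposition \ref{theo:principal}. It may be worth remarking explicitly that the proposition gives only an inclusion, not equality: to obtain the reverse inclusion one would need to verify which of the maps $g_\pi$ actually respect the multiplication of $\mathcal{A}(G)$, a question completely analogous to Proposition \ref{prop:pi} and to the cycle-graph phenomenon illustrated in Example \ref{exa:cycle}.
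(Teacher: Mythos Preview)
Your proof is correct and follows essentially the same route as the paper: take $g\in\aut\mathcal{A}(G)$, pass to $f=d^{-1}g$ via Proposition~\ref{prop:auto}, then apply Proposition~\ref{theo:principal} to force $f$ (and hence $g$) into the form $g_\pi$. Your explicit exclusion of the null alternative and the closing remark about the inclusion being strict are nice additions that the paper leaves implicit.
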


\begin{proof}
Let $g \in \aut \mathcal{A}(G)$. By the proof of Proposition \ref{prop:auto}, there exists an isomorphism $f:\mathcal{A}_{RW}(G)\longrightarrow \mathcal{A}(G)$ such that $f(e_i) := (1/d)\,g(e_i)$, for any $i\in V$. On the other hand, as $G$ is a non-singular graph we have by Proposition \ref{theo:principal} that $f(e_i)=\alpha_i e_{\pi(i)}$, where the $\alpha_i$'s are scalars and $\pi$ is an element of the symmetric group $S_n$. Therefore $g=g_\pi$ and the proof is completed.  
\end{proof}

In \cite[Proposition 3.1]{camacho/gomez/omirov/turdibaev/2013} it has been stated that for any evolution algebra $E$ with a non-singular matrix of structural constants it holds that $\aut E=\{g_\pi:\pi \in S_n\}$. Example \ref{exa:cycle} shows that if $E:=\mathcal{A}(C_5)$ (so $\det A = 2$), then $\aut E\subsetneq \{g_\pi:\pi \in S_n\}$, which contradicts the equality stated by \cite{camacho/gomez/omirov/turdibaev/2013}. The mistake behind their result is in the proof. Indeed, although the authors assume correctly that an automorphism $g$ should verify $g(e_i\cdot e_j)=g(e_i)\cdot g(e_j)$, they only check this equality when $i\neq j$. When one check also the equality for $i=j$ one can obtain the condition that $\pi$ must satisfy in order to be an automorphism. This is the spirit behind our Proposition \ref{prop:pi}. The same arguments of our proof lead to the following version of \cite[Proposition 3.1]{camacho/gomez/omirov/turdibaev/2013}.

\begin{prop}
Let $E$ be an evolution algebra with natural basis $\{e_i:i\in V\}$, and a non-singular matrix of structural constants $C=(c_{ij})$. Then
$$\aut E=\{g_{\pi}:\pi \in S_n \text{ and }c_{i\pi^{-1}(k)}\,\alpha _{\pi^{-1}(k)}=\alpha_i^2\,c_{\pi(i)k} , \text{  for any } i,k \in V\}.$$
\end{prop}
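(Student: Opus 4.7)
The proof is essentially a matter of packaging two arguments the paper has already developed, now applied in the setting of an abstract evolution algebra whose structure matrix $C=(c_{ij})$ is non-singular. I would split the proof into the two inclusions.

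For the inclusion $\supseteq$, I would pick any $\pi\in S_n$ together with nonzero scalars $\alpha_i$ satisfying the stated identity and check directly that $g_\pi(e_i)=\alpha_i e_{\pi(i)}$ is an automorphism. Linearity and bijectivity of $g_\pi$ are immediate from $\pi\in S_n$ and $\alpha_i\neq 0$. For the multiplicative condition, when $i\neq j$ both $g_\pi(e_i\cdot e_j)$ and $g_\pi(e_i)\cdot g_\pi(e_j)$ vanish. When $i=j$, expanding both sides gives
\[
g_\pi(e_i^2)=\sum_{k\in V} c_{ik}\,\alpha_k e_{\pi(k)}=\sum_{k\in V} c_{i\pi^{-1}(k)}\,\alpha_{\pi^{-1}(k)}\,e_k,
\]
while $g_\pi(e_i)\cdot g_\pi(e_i)=\alpha_i^2\,e_{\pi(i)}^2=\alpha_i^2\sum_{k\in V}c_{\pi(i)k}\,e_k$; equality term by term is exactly the hypothesis on $\pi$ and $(\alpha_i)$.

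For the inclusion $\subseteq$, I would mimic the argument of Proposition \ref{theo:principal} almost verbatim, replacing the adjacency matrix by $C$. Write $g(e_i)=\sum_k t_{ik}e_k$. Orthogonality $g(e_i)\cdot g(e_j)=0$ for $i\neq j$ gives, after expanding in the natural basis,
\[
\sum_{k\in V}t_{ik}t_{jk}\,c_{kr}=0\quad\text{for every }r\in V,
\]
i.e.\ $C^{T}(t_{i1}t_{j1},\dots,t_{in}t_{jn})^{T}=0$. Since $C$ is non-singular this forces $t_{ik}t_{jk}=0$ for all $i\neq j$ and all $k$, so in every column of the matrix $T=(t_{ij})$ at most one entry is nonzero. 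The crucial new input (compared with the graph case, where connectedness had to be invoked) is that here $g$ is assumed bijective, so $T$ is itself non-singular; thus every column, and by the square pigeonhole every row, must contain exactly one nonzero entry. This yields a permutation $\pi\in S_n$ and nonzero scalars $\alpha_i$ with $g(e_i)=\alpha_i e_{\pi(i)}$, i.e.\ $g=g_\pi$. Finally, applying the same calculation as in Proposition \ref{prop:pi} (now with $C$ playing the role of the adjacency matrix and without the factor $\deg(i)$, since $e_i^2=\sum_k c_{ik}e_k$ directly), the identity $g(e_i^2)=g(e_i)\cdot g(e_i)$ translates, after matching coefficients of $e_k$, into $c_{i\pi^{-1}(k)}\,\alpha_{\pi^{-1}(k)}=\alpha_i^2\,c_{\pi(i)k}$ for every $i,k\in V$.

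The only genuinely delicate step is passing from $t_{ik}t_{jk}=0$ to the permutation structure; in Proposition \ref{theo:principal} connectedness of $G$ was used, but here the cleaner substitute is bijectivity of $g$ together with finiteness of $V$, which is exactly the hypothesis that the mistaken argument in \cite{camacho/gomez/omirov/turdibaev/2013} glossed over. Everything else is a direct transcription of the computations already performed in Propositions \ref{theo:principal} and \ref{prop:pi}, so no new ideas are needed beyond faithfully including the condition imposed by the $i=j$ case of the homomorphism property.
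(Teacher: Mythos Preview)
Your proof is correct and follows exactly the route the paper indicates (it merely says ``the same arguments of our proof lead to'' the proposition, meaning Propositions \ref{theo:principal} and \ref{prop:pi} adapted to an abstract $C$); you have simply spelled out those details, and your observation that bijectivity of $g$ replaces the connectedness argument from Proposition \ref{theo:principal} is the right simplification. One minor inaccuracy in your commentary: the error in \cite{camacho/gomez/omirov/turdibaev/2013} was not about glossing over bijectivity or the permutation structure---they correctly obtained $\aut E\subseteq\{g_\pi:\pi\in S_n\}$---but rather about failing to check the $i=j$ case of the homomorphism identity, which is what produces the extra constraint on $\pi$ and $(\alpha_i)$ and shows the inclusion can be strict.
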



\section*{Acknowledgments}

P.C. was supported by CNPq (grant number 235081/2014-0). P.M.R was supported by FAPESP (grant numbers 2016/11648-0, 17/10555-0) and CNPq (grant number 304676/2016-0).


\bigskip

\end{document}